\theoremstyle{plain}
\newtheorem{ozn}{Definition}[section]
\newtheorem{thm}{Theorem}[section]
\newtheorem{nas}{Corollary}[section]
\newtheorem{lema}{Lemma}[section]
\newcommand{\me}{\mathbf}
\newcommand{\mr}{\mathbb}
\newcommand{\mt}{\mathsf}
\newcommand{\md}{\mathcal}
\newcommand{\ld}{\left}
\newcommand{\rd}{\right}
\newcommand{\ip}{\int_{-\pi}^{\pi}}
\newcommand{\be}{\begin{equation}}
\newcommand{\ee}{\end{equation}}
\newcommand{\bem}{\begin{multline}}
\newcommand{\eem}{\end{multline}}
\newcommand{\bml}{\begin{multline*}}
\newcommand{\eml}{\end{multline*}}
\newcommand{\beg}{\begin{gather}}
\newcommand{\eeg}{\end{gather}}
\begin{document}

\title{Estimation problem for continuous time stochastic processes with periodically correlated increments}

\author{
Maksym Luz\thanks {BNP Paribas Cardif in Ukraine, Kyiv, Ukraine, maksym.luz@gmail.com},
Mikhail Moklyachuk\thanks
{Department of Probability Theory, Statistics and Actuarial
Mathematics, Taras Shevchenko National University of Kyiv, Kyiv 01601, Ukraine, moklyachuk@gmail.com}
}

\date{\today}

\maketitle

\renewcommand{\abstractname}{Abstract}
\begin{abstract}
We deal with the problem of optimal estimation of the linear   functionals
constructed from unobserved values of a continuous time  stochastic process with periodically correlated increments based on past observations of this process.
To solve the problem, we construct a corresponding to the process sequence of stochastic functions  which forms an infinite dimensional vector stationary increment sequence.
In the case of known spectral density of the stationary increment sequence, we obtain formulas for calculating values of the mean square errors and the spectral characteristics of the optimal estimates of the functionals.
Formulas determining  the least favorable spectral densities and the minimax (robust) spectral
characteristics of the optimal linear estimates of functionals
are derived in the case where the  sets of admissible spectral densities are given.
\end{abstract}

\maketitle

\textbf{Keywords}:{Periodically Correlated Increments, Minimax-Robust Estimate, Mean Square Error}

\maketitle

\vspace{2ex}
\textbf{\bf AMS 2010 subject classifications.} Primary: 60G10, 60G25, 60G35, Secondary: 62M20, 62P20, 93E10, 93E11

\section*{Introduction}

In this article, we study the prediction problem for the continuous time stochastic processes $\xi(t)$, $t\in\mr R$, with periodically
correlated increments $\xi^{(d)}(t,\tau T)=\Delta_{T\tau}^d \xi(t)$ of    order $d$ and period $T$, where $\Delta_{s} \xi(t)=\xi(t)-\xi(t-s)$. The resent studies, for example, by Basawa et al. \cite{Basawa}, Dudek et al. \cite{Dudek-Hurd}, Reisen et al. \cite{Reisen2018}, show a  constant interest to the non-stationary models and robust methods of estimation.

Kolmogorov \cite{Kolmogorov},  Wiener \cite{bookWien} and  Yaglom \cite{Yaglom} developed effective methods of solution  of interpolation, extrapolation (prediction) and filtering problems for stationary stochastic sequences and processes. For a particular problem, they found an estimate $\widetilde{x}(t)$ constructed from available observations that minimizes the mean square error $\Delta(\widetilde{x}(t),f)=\mt E|x(t)-\widetilde{x}(t)|^2$ in the case where the spectral density $f(\lambda)$ of the stationary process or sequence $x(t)$ is exactly known and fixed.
Such estimates  are called optimal linear estimates  within this article.

The developed  classical estimation methods are not directly applicable in practice since the exact  spectral structure of the processes is not usually available. In this case the estimated spectral densities can be considered as the true ones. However, Vastola and Poor \cite{VastPoor1983} showed with the help of the concrete examples, that such substitution  can result in a significant increase of  the   estimate error.  Therefore it is reasonable to consider the estimates, called minimax-robust, which minimize the maximum of the mean-square errors for all spectral densities from a given set of admissible spectral densities simultaneously. The minimax-robust method of extrapolation was proposed by Grenander \cite{Grenander} who considered an estimation of the functional $Ax = \int_{0}^1 a (t) x (t) dt$ as a  game between two players, one of which minimizes $\Delta(\widetilde{A}x,f)$ by $\widetilde{A}\zeta$ and another one maximizes it by $f$. The game has a saddle point under proper conditions:
\[ \max_{\substack{f}} \min_{\substack{\widetilde{A}x}} \mt \Delta(\widetilde{A}x,f) = \min_{\substack{\widetilde{A}x}} \max_{\substack {f}} \Delta(\widetilde{A}x,f) =\nu. \]
  For more details see the further study by Franke and Poor \cite{Franke1984} and  the survey paper by Kassam and Poor \cite{Kassam_Poor1985}. A wide range of results has been obtained by  Moklyachuk \cite{Moklyachuk:1981,Moklyachuk,Moklyachuk2015}. These results have been  extended on the vector-valued stationary processes and sequences by Moklyachuk and Masyutka \cite{Mokl_Mas_book}.

The concept of stationarity   admits some generalizations, a combination of two of which --   stationary $d$th increments and periodical  correlation  -- is in scope of this article.
Random processes with stationary $d$th increments $x(t)$ were introduced   by Yaglom and Pinsker \cite{Pinsker}. The increment sequence $x^{(d)}(t,\tau )=\Delta_{\tau}^d x(t)$ generated by such process is stationary by the variable $t$, namely, the mathematical expectations $\mt E\xi^{(n)}(t,\tau)$ and $\mt E\xi^{(n)}(t+s,\tau_1)\xi^{(n)}(t,\tau_2)$ do not depend on $t$.
Yaglom and Pinsker \cite{Pinsker} described the spectral representation of such process and the spectral density canonical factorization, and they also solved the extrapolation problem for these processes.
 The minimax-robust extrapolation, interpolation and filtering  problems for stochastic  processes with stationary increments were investigated by Luz and Moklyachuk \cite{Luz_Mokl_book}.

Dubovetska and Moklyachuk \cite{Dubovetska6}  derived the  classical and minimax-robust estimates for another generalization of stationary processes --  periodically  correlated (cyclostationary) processes, introduced by Gladyshev \cite{Glad1963}. The correlation function  $K(t,s)={\textsf{E}}{x(t)\overline{x(s)}}$ of such processes is a $T$-periodic function: $K(t,s)=K(t+T,s+T)$, which implies a time-dependent spectrum. Periodically  correlated  processes are widely used in signal processing and communications (see Napolitano \cite{Napolitano} for a review of recent works on cyclostationarity and its applications).

In this article, we deal with the problem of the mean-square optimal estimation of the linear functionals $A\xi=\int_{0}^{\infty}a(t)\xi(t)dt$ and $A_{NT}{\xi}=\int_{0}^{(N+1)T}a(t)\xi(t)dt$
which depend on the unobserved values of a continuous time stochastic process $\xi(t)$ with periodically stationary $d$-th
increments from its observations at points
$t<0$. The similar problems for discrete time processes have been studied by Kozak and Moklyachuk \cite{Kozak_Mokl}, Luz and Moklyachuk \cite{Luz_Mokl_extra_GMI,Luz_Mokl_extra_noise_PCI}.
   In section 2, we describe a presentation of a continuous time periodically stationary process as a  stationary H-valued sequence.
   This approach is extended on the periodically stationary increments in the subsection 3.1.
   In   subsection 3.2, the maximum mean square error of the estimate is derived. The classical prediction (or extrapolation) problem is solved in subsection 3.3.
   Particularly, formulas for calculating  the mean-square errors and the spectral characteristics of the optimal linear
estimates of the functionals $A\xi$ and $A_{NT}{\xi}$ are derived under the condition of spectral certainty.
In subsection 4, we present our results on minimax-robust prediction for the studied processes: relations that determine the least favourable spectral densities and the minimax spectral
characteristics derived for some classes of spectral densities.

\section{Continuous time periodically correlated processes and generated vector stationary sequences}
\label{section_PC_process}
In this section we present a brief review of periodically correlated processes and describe an approach to be applied in the next section to develop a spectral theory for periodically correlated increment process.

\begin{ozn}[Gladyshev \cite{Glad1963}] A mean-square continuous stochastic process
$\zeta:\mathbb R\to H=L_2(\Omega,\mathcal F,\mathbb P)$, with $\textsf {E} \zeta(t)=0,$ is called periodically correlated (PC) with period  $T$, if its correlation function  $K(t,s)={\textsf{E}}{\zeta(t)\overline{\zeta(s)}}$  for all  $t,s\in\mathbb R$ and some fixed $T>0$ is such that
\[
K(t,s)={\textsf{E}}{\zeta(t)\overline{\zeta(s)}}={\textsf{E}}{\zeta(t+T)\overline{\zeta(s+T)}}=K(t+T,s+T).
\]
\end{ozn}

For a periodically correlated  stochastic process $\zeta(t)$, one can construct
the following sequence of stochastic functions \cite{DubovetskaMoklyachuk2013}, \cite{MoklyachukGolichenko2016}
\begin{equation} \label{zj}
\{\zeta_j(u)=\zeta(u+jT),u\in [0,T), j\in\mathbb Z\}.
\end{equation}
Sequence (\ref{zj}) forms a $L_2([0,T);H)$-valued
stationary sequence $\{\zeta_j,j\in\mathbb Z\}$ with the correlation function
\[
B_{\zeta}(l,j) = \langle\zeta_l,\zeta_j\rangle_H =\int_0^T
\textsf{E}[\zeta(u+lT)\overline{\zeta(u+jT)}]du
=\int_0^TK_{\zeta}(u+(l-j)T,u)du =
 B_{\zeta}(l-j),
 \]
where
$K_{\zeta}(t,s)=\textsf{E}{\zeta}(t)\overline{{\zeta}(s)}$
is the correlation function of the PC process $\zeta(t)$.
If we chose the following orthonormal basis in the space $L_2([0,T);\mathbb{R})$
\be\label{orthonormal_basis}
\{\widetilde{e}_k=\frac{1}{\sqrt{T}}e^{2\pi
i\{(-1)^k\left[\frac{k}{2}\right]\}u/T}, k=1,2,3,\dots\}, \quad
\langle \widetilde{e}_j,\widetilde{e}_k\rangle=\delta_{kj},
\ee
 the stationary sequence $\{\zeta_j,j\in\mathbb Z\}$  can be represented in the form
\[
\zeta_j= \sum_{k=1}^\infty \zeta_{kj}\widetilde{e}_k,
\]
where
\[\zeta_{kj}=\langle\zeta_j,\widetilde{e}_k\rangle =
\frac{1}{\sqrt{T}} \int_0^T \zeta_j(v)e^{-2\pi
i\{(-1)^k\left[\frac{k}{2}\right]\}v/T}dv.\]
The sequence
$\{\zeta_j,j\in\mathbb Z\},$
   or the corresponding to it a vector sequence
  \[\{\vec \zeta_j=(\zeta_{kj}, k=1,2,\dots)^{\top},
j\in\mathbb Z\},\]
is called a generated by the process $\{\zeta(t),t\in\mathbb R\}$ vector stationary sequence.
Components $\{\zeta_{kj}\}: k=1,2,\dots;j\in\mathbb Z$  of the generated stationary sequence
$\{\zeta_j,j\in\mathbb Z\}$ satisfy the following relations \cite{Kallianpur}, \cite{Moklyachuk:1981}
\[
\textsf{E}{\zeta_{kj}}=0, \quad \|{\zeta}_j\|^2_H=\sum_{k=1}^\infty
\textsf{E}|\zeta_{kj}|^2\leq P_\zeta=B_\zeta(0), \quad
\textsf{E}\zeta_{kl}\overline{\zeta}_{nj}=\langle
R_{\zeta}(l-j)\widetilde{e}_k,\widetilde{e}_n\rangle.
\]
The correlation function $R_{\zeta}(j)$  of the generated stationary
sequence $\{\zeta_j,j\in\mathbb Z\}$
 are correlation operator functions.
 The correlation operator $R_{\zeta}(0)=R_{\zeta}$ is a
kernel operator and its kernel norm satisfies the following limitations:
\[
\|{\zeta}_j\|^2_H=\sum_{k=1}^\infty \langle R_{\zeta}z
\widetilde{e}_k,\widetilde{e}_k\rangle\leq P_\zeta. \]
The generated stationary sequence $\{\zeta_j,j\in\mathbb Z\}$ has the spectral density function
$f(\lambda)=\{f_{kn}(\lambda)\}_{k,n=1}^\infty$, that is positive valued operator  functions of variable
 $\lambda\in [-\pi,\pi)$, if its correlation function $R_{\zeta}(j)$ can be represented in the form
\[
\langle R_{\zeta}(j)\widetilde{e}_k,\widetilde{e}_n\rangle=\frac{1}{2\pi} \int _{-\pi}^{\pi}
e^{ij\lambda}\langle f(\lambda) \widetilde{e}_k,\widetilde{e}_n\rangle d\lambda.
\]
We finish our review by the statement, that
For almost all   $\lambda\in [-\pi,\pi)$ the spectral density $f(\lambda)$ is a kernel operator with an integrable kernel norm
\[
\sum_{k=1}^\infty \frac{1}{2\pi} \int _{-\pi}^\pi \langle f(\lambda)
\widetilde{e}_k,\widetilde{e}_k\rangle d\lambda=\sum_{k=1}^\infty\langle R_{\zeta}
\widetilde{e}_k,\widetilde{e}_k\rangle=\|{\zeta}_j\|^2_H\leq P_\zeta.
\]

\section{Extrapolation problem for stochastic processes with periodically correlated $d$th increments}

\subsection{Stochastic processes with periodically correlated $d$th increments}

For a given stochastic process $\{\xi(t),t\in\mathbb R\}$, consider the stochastic $d$th increment process
\begin{equation}
\label{oznachPryrostu_cont}
\xi^{(d)}(t,\tau)=(1-B_{\tau})^d\xi(t)=\sum_{l=0}^d(-1)^l{d \choose l}\xi(t-l\tau),
\end{equation}
with the step $\tau\in\mr R$, generated by the stochastic process $\xi(t)$. Here $B_{\tau}$ is the backward shift operator: $B_{\tau}\xi(t)=\xi(t-\tau)$, $\tau\in \mr R$.

We prefer to use the notation $\xi^{(d)}(t,\tau)$ instead of widely used $\Delta_{\tau}^{d}\xi(t)$ to avoid a duplicate with the mean square error notation.

\begin{ozn}
\label{OznPeriodProc2_cont}
A stochastic process $\{\xi(t),t\in\mathbb R\}$ is called a \emph{stochastic
process with periodically stationary (periodically correlated) increments} with the step $\tau\in\mr Z$ and the period $T>0$ if the mathematical expectations
\begin{eqnarray*}
\mt E\xi^{(d)}(t+T,\tau T) & = & \mt E\xi^{(d)}(t,\tau T)=c^{(d)}(t,\tau T),
\\
\mt E\xi^{(d)}(t+T,\tau_1 T)\xi^{(d)}(s+T,\tau_2 T)
& = & D^{(d)}(t+T,s+T;\tau_1T,v_2T)
\\
& = & D^{(d)}(t,s;\tau_1T,\tau_2T)
\end{eqnarray*}
exist for every  $t,s\in \mr R$, $\tau_1,\tau_2 \in \mr Z$ and for some fixed $T>0$.
\end{ozn}

The functions $c^{(d)}(t,\tau T)$ and  $D^{(d)}(t,s;\tau_1T,\tau_2 T)$ from the Definition \ref{OznPeriodProc2_cont} are called the \emph{mean value} and  the \emph{structural function} of the stochastic
process $\xi(t)$ with periodically stationary (periodically correlated) increments.

For the stochastic process $\{\xi(t), t\in \mathbb R\}$ with periodically correlated  increments $\xi^{(d)}(t,\tau T)$ and the integer step $\tau$, we follow the procedure described in the Section \ref{section_PC_process} and construct
a sequence of stochastic functions
\begin{equation} \label{xj}
\{\xi^{(d)}_j(u):=\xi^{(d)}_{j,\tau}(u)=\xi^{(d)}_j(u+jT,\tau T),\,\, u\in [0,T), j\in\mathbb Z\}.
\end{equation}

Sequence (\ref{xj}) forms a $L_2([0,T);H)$-valued
stationary increment sequence $\{\xi^{(d)}_j,j\in\mathbb Z\}$ with the structural function
\begin{eqnarray*}
B_{\xi^{(d)}}(l,j)&= &\langle\xi^{(d)}_l,\xi^{(d)}_j\rangle_H =\int_0^T
\textsf{E}[\xi^{(d)}_j(u+lT,\tau_1 T)\overline{\xi^{(d)}_j(u+jT,\tau_2 T)}]du
\\&=&\int_0^T D^{(d)}(u+(l-j)T,u;\tau_1T,\tau_2T) du =
 B_{\xi^{(d)}}(l-j).
 \end{eqnarray*}
Making use of the orthonormal basis \eqref{orthonormal_basis} the stationary increment sequence  $\{\xi^{(d)}_j,j\in\mathbb Z\}$ can be represented in the form
\begin{equation} \label{zeta}
\xi^{(d)}_j= \sum_{k=1}^\infty \xi^{(d)}_{kj}\widetilde{e}_k,\end{equation}
where
\[\xi^{(d)}_{kj}=\langle\xi^{(d)}_j,\widetilde{e}_k\rangle =
\frac{1}{\sqrt{T}} \int_0^T \xi^{(d)}_j(v)e^{-2\pi
i\{(-1)^k\left[\frac{k}{2}\right]\}v/T}dv.\]

We call this sequence
$\{\xi^{(d)}_j,j\in\mathbb Z\},$
   or the corresponding to it vector sequence
  \[\{\vec\xi^{(d)}_j=(\xi^{(d)}_{kj}, k=1,2,\dots)^{\top},
j\in\mathbb Z\},\]
\emph{an infinite dimension vector stationary increment sequence} generated by the increment process $\{\xi^{(d)}(t,\tau T),t\in\mathbb R\}$.
Further, we will omit the word vector in the notion generated vector stationary increment sequence.

Components $\{\xi^{(d)}_{kj}\}: k=1,2,\dots;j\in\mathbb Z$  of the generated stationary increment sequence
$\{\xi^{(d)}_j,j\in\mathbb Z\}$ are such that, \cite{Kallianpur}, \cite{Moklyachuk:1981}
\[
\textsf{E}{\xi^{(d)}_{kj}}=0, \quad \|\xi^{(d)}_j\|^2_H=\sum_{k=1}^\infty
\textsf{E}|\xi^{(d)}_{kj}|^2\leq P_{\xi^{(d)}}=B_{\xi^{(d)}}(0),
\]
and
\[\textsf{E}\xi^{(d)}_{kl}\overline{\xi^{(d)}_{nj}}=\langle
R_{\xi^{(d)}}(l-j;\tau_1,\tau_2)\widetilde{e}_k,\widetilde{e}_n\rangle.
\]
The \emph{structural function}  $R_{\xi^{(d)}}(j):=R_{\xi^{(d)}}(j;\tau_1,\tau_2)$  of the generated stationary increment
sequence $\{\xi^{(d)}_j,j\in\mathbb Z\}$
 is a correlation operator function.
 The correlation operator $R_{\xi^{(d)}}(0)=R_{\xi^{(d)}}$ is a
kernel operator and its kernel norm satisfies the following limitations:
\[
\|\xi^{(d)}_j\|^2_H=\sum_{k=1}^\infty \langle R_{\xi^{(d)}}
\widetilde{e}_k,\widetilde{e}_k\rangle\leq P_{\xi^{(d)}}. \]

 Suppose that  the structural function $R_{\xi^{(d)}}(j)$ admits a representation
\[
\langle R_{\xi^{(d)}}(j;\tau_1, \tau_2)\widetilde{e}_k,\widetilde{e}_n\rangle=\frac{1}{2\pi} \int _{-\pi}^{\pi}
e^{ij\lambda}(1-e^{-i\tau_1\lambda})^d(1-e^{i\tau_2\lambda})^d\frac{1}
{\lambda^{2d}}\langle f(\lambda) \widetilde{e}_k,\widetilde{e}_n\rangle d\lambda.
\]
Then
$f(\lambda)=\{f_{kn}(\lambda)\}_{k,n=1}^\infty$ is a \emph{spectral density function} of the generated stationary increment sequence $\{\xi^{(d)}_j,j\in\mathbb Z\}$. It  is a positive valued operator  functions of variable
 $\lambda\in [-\pi,\pi)$, and for almost all   $\lambda\in [-\pi,\pi)$ it is a kernel operator with an integrable kernel norm
\begin{equation} \label{P1}
\sum_{k=1}^\infty \frac{1}{2\pi} \int _{-\pi}^\pi (1-e^{-i\tau_1\lambda})^d(1-e^{i\tau_2\lambda})^d\frac{1}
{\lambda^{2d}} \langle f(\lambda)
\widetilde{e}_k,\widetilde{e}_k\rangle d\lambda=\sum_{k=1}^\infty\langle R_{\xi^{(d)}}
\widetilde{e}_k,\widetilde{e}_k\rangle=\|{\zeta}_j\|^2_H\leq P_{\xi^{(d)}}.
\end{equation}

\subsection{Extrapolation problem: the greatest value of the mean-square error}

Consider the problem of the mean square optimal linear  estimation  of the functionals
\[A {\xi}=\int_{0}^{\infty}a(t)\xi(t)dt,\quad A_{NT}{\xi}=\int_{0}^{(N+1)T}a(t)\xi(t)dt\]
which depend on the unknown values of the stochastic process $\xi(t)$ with periodically correlated $d$th increments.
Estimates are based on observations of the process $\xi(t)$  at points $t<0$.

 \begin{lema}[Luz and Moklyachuk \cite{Luz_Mokl_book}]\label{lema predst A_cont}
Any linear functional
\[
    A\xi=\int_{0}^{\infty}a(t)\xi(t)dt\]
allows the representation
\[
    A\xi=B\xi-V\xi,\]
where
\[
    B\xi=\int_{0}^{\infty}b^{\tau}(t)\xi^{(d)}(t,\tau T)dt,\quad V\xi=\int_{-\tau T d}^{0}v^{\tau}(t)\xi(t)dt,\]
and
\begin{eqnarray} \label{koefv_cont}
    v^{\tau}(t)&=&\sum_{l=\left[-\frac{t}{\tau T}\right]'}^d(-1)^l{d \choose l}b^{\tau}(t+l\tau T),\quad t\in [-\tau T d;0),
\\
    \label{koef b_cont}b^{\tau}(t)&=&\sum_{k=0}^{\infty}a(t+\tau T k)d(k)=D^{\tau T}\me a(t) ,\,\,t\geq0,
\end{eqnarray}
 where  $[x]'$ denotes the least integer number among numbers that are equal to or greater than  $x$, coefficients
  $\{d(k):k\geq0\}$ are determined by the relation
\[\sum_{k=0}^{\infty}d (k)x^k=\left(\sum_{j=0}^{\infty}x^{
j}\right)^d,\]
$D^{\tau T}$ is the linear transformation acting on an arbitrary function $x(t)$, $t\geq0$, as follows:
\[
    D^{\tau T}\me x(t)=\sum_{k=0}^{\infty}x(t+\tau T k )d(k).\]
\end{lema}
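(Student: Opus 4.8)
The plan is to verify the claimed identity $A\xi = B\xi - V\xi$ directly by substituting the definition of the $d$th increment process into $B\xi$, reorganising the resulting double integral/sum, and matching terms against $A\xi$ and $V\xi$. The engine of the whole computation is the algebraic identity linking the sequence $\{d(k)\}$ to the increment operator: since $\sum_{k\ge 0} d(k) x^k = \bigl(\sum_{j\ge 0} x^j\bigr)^d = (1-x)^{-d}$, the transformation $D^{\tau T}$ is exactly the formal inverse of the increment operator $(1-B_{\tau T})^d$. So the first thing I would do is record precisely this: that $D^{\tau T}$ inverts $(1-B_{\tau T})^{d}$ on the half-line in the appropriate formal sense.

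First I would write out $B\xi = \int_0^\infty b^\tau(t)\,\xi^{(d)}(t,\tau T)\,dt$ and expand $\xi^{(d)}(t,\tau T) = \sum_{l=0}^d (-1)^l \binom{d}{l}\xi(t - l\tau T)$ using \eqref{oznachPryrostu_cont}. This produces a finite sum of integrals $\sum_{l=0}^d (-1)^l \binom{d}{l}\int_0^\infty b^\tau(t)\,\xi(t - l\tau T)\,dt$. In each term I would substitute $s = t - l\tau T$ to align the argument of $\xi$, giving $\int_{-l\tau T}^\infty b^\tau(s + l\tau T)\,\xi(s)\,ds$. Collecting the coefficient of $\xi(s)$ across all $l$ then splits the $s$-axis into the region $s\ge 0$, where every $l$ from $0$ to $d$ contributes, and the region $s\in[-\tau T d, 0)$, where only those $l$ with $l\tau T \ge -s$, i.e. $l \ge [-s/(\tau T)]'$, survive. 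The second region reproduces exactly $-V\xi$ with $v^\tau$ as in \eqref{koefv_cont}.

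The heart of the argument is the region $s\ge 0$: there the coefficient of $\xi(s)$ is $\sum_{l=0}^d (-1)^l \binom{d}{l} b^\tau(s + l\tau T)$, and I must show this equals $a(s)$, so that this part reproduces $A\xi = \int_0^\infty a(t)\xi(t)\,dt$. Plugging in \eqref{koef b_cont}, $b^\tau(t) = \sum_{k\ge 0} a(t + \tau T k)\,d(k)$, turns the coefficient into a double sum $\sum_{l=0}^d\sum_{k\ge 0}(-1)^l\binom{d}{l} d(k)\,a(s + (k+l)\tau T)$. Reindexing by $m = k + l$ and collecting, the coefficient of $a(s + m\tau T)$ becomes $\sum_{l} (-1)^l \binom{d}{l} d(m - l)$, which is the $m$th coefficient of the product of generating functions $(1-x)^d \cdot (1-x)^{-d} = 1$. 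Hence this convolution is $\delta_{m,0}$, and only $a(s)$ survives, as required. This generating-function collapse is the step I expect to be the main obstacle, not because it is deep but because it requires justifying the interchange of the infinite $k$-sum with the finite $l$-sum and the reindexing, together with care that the boundary effects at $s\ge 0$ versus $s<0$ are correctly separated; I would handle convergence by appealing to the integrability/summability already guaranteed for $b^\tau$ and $a$ via \eqref{P1} and the standing assumptions, treating the rearrangement as justified under absolute summability.
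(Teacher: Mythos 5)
The paper states this lemma without proof, importing it verbatim from Luz and Moklyachuk's book, and your argument is precisely the standard derivation behind it: expand $\xi^{(d)}(t,\tau T)$ inside $B\xi$, shift variables, split the $s$-axis at $0$, and collapse the coefficient $\sum_{l}(-1)^l\binom{d}{l}d(m-l)$ to $\delta_{m,0}$ via $(1-x)^d(1-x)^{-d}=1$ — all correct. One bookkeeping remark: the contribution of the region $s\in[-\tau T d,0)$ to $B\xi$ equals $+V\xi$ (not $-V\xi$), so that $B\xi=A\xi+V\xi$ and hence $A\xi=B\xi-V\xi$; your sketch momentarily flips this sign but the conclusion is unaffected.
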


\begin{nas}\label{nas predst A_T_cont}
A linear functional
\[A_{NT}\xi=\int_{0}^{(N+1)T}a(t)\xi(t)dt\]
allows the representation
\[
    A_{NT}\xi=B_{NT}\xi-V_{NT}\xi,\]
where
\[
    B_{NT}\xi=\int_0^{(N+1)T} b^{\tau,N}(t)\xi^{(d)}(t,\tau T)dt,\quad V_{NT}\xi=\int_{-\tau T d}^{0}v^{\tau,N}(t)\xi(t)dt,\]
and
\begin{eqnarray}\label{koefv_N_cont}
    v^{\tau,N}(t)&=&\sum_{l=\left [-\frac{t}{\tau T}\right ]'}^{\min\left \{\left [\frac{{(N+1)T}-t}{\tau T}\right ],d\right \}}(-1)^l{d \choose l}b^{\tau, N}(t+l\tau T),\quad t\in[-\tau T d;0),
\\
\label{koef_N b_cont}
    b^{\tau,N}(t)&=&\sum_{k=0}^{\left[\frac{{(N+1)T}-t}{\tau T}\right]}a( t +\tau T k)d(k)= D^{\tau T,N}\me a(t),\,
t\in[0;{(N+1)T}],
\end{eqnarray}
the linear transformation $D^{\tau T,N}$ acts on an arbitrary function $x(t)$, $t\in[0;{(N+1)T}]$, as follows:
\[
    D^{\tau T,N}\me x(t)=\sum_{k=0}^{\left[\frac{{(N+1)T}-t}{\tau T}\right]}x(t+\tau T k)d(k).\]
\end{nas}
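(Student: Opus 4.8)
The plan is to derive Corollary~\ref{nas predst A_T_cont} from Lemma~\ref{lema predst A_cont} by a restriction (truncation) argument, so that no computation has to be repeated. First I would extend the weight to the whole half-line by setting $\tilde a(t)=a(t)$ for $t\in[0,(N+1)T]$ and $\tilde a(t)=0$ for $t>(N+1)T$. With this choice the two functionals coincide, $A\tilde\xi=\int_0^{\infty}\tilde a(t)\xi(t)dt=\int_0^{(N+1)T}a(t)\xi(t)dt=A_{NT}\xi$, so it suffices to apply the representation $A\tilde\xi=B\tilde\xi-V\tilde\xi$ supplied by the Lemma and then check that its coefficients collapse to $b^{\tau,N}$ and $v^{\tau,N}$.

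Next I would evaluate the coefficient $b^{\tau}$ of \eqref{koef b_cont} on the extended weight. Since $\tilde a(t+\tau Tk)=0$ as soon as $t+\tau Tk>(N+1)T$, i.e. for $k>[((N+1)T-t)/(\tau T)]$, the infinite sum $D^{\tau T}\tilde a(t)=\sum_{k\ge0}\tilde a(t+\tau Tk)d(k)$ collapses to the finite sum $\sum_{k=0}^{[((N+1)T-t)/(\tau T)]}a(t+\tau Tk)d(k)$, which is exactly $D^{\tau T,N}\me a(t)=b^{\tau,N}(t)$ from \eqref{koef_N b_cont}. Moreover $b^{\tau}(t)=0$ for $t>(N+1)T$, so the integral in $B\tilde\xi$ runs only over $[0,(N+1)T]$; hence $B\tilde\xi=B_{NT}\xi$.

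Then I would treat the boundary coefficient $v^{\tau}$ of \eqref{koefv_cont}. For $t\in[-\tau Td;0)$ the shifted value $b^{\tau}(t+l\tau T)=\sum_{k\ge0}\tilde a(t+(l+k)\tau T)d(k)$ vanishes whenever $t+l\tau T>(N+1)T$, i.e. for $l>[((N+1)T-t)/(\tau T)]$, while the lower limit $[-t/(\tau T)]'$ is untouched by the restriction. Therefore the upper index $d$ in \eqref{koefv_cont} may be replaced by $\min\{[((N+1)T-t)/(\tau T)],d\}$ without changing the value, and for the surviving indices $b^{\tau}(t+l\tau T)=b^{\tau,N}(t+l\tau T)$ by the previous step. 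This reproduces \eqref{koefv_N_cont}, so $V\tilde\xi=V_{NT}\xi$ and the asserted representation follows.

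The main obstacle I anticipate is purely combinatorial bookkeeping at the right endpoint: one must verify that the two truncations — of the index $k$ inside $b^{\tau,N}$ and of the index $l$ in the outer sum defining $v^{\tau,N}$ — are mutually consistent and together yield precisely $\min\{[((N+1)T-t)/(\tau T)],d\}$, with no omitted or doubly counted terms. If a self-contained derivation is preferred, the same identity can be obtained directly by substituting $\xi^{(d)}(t,\tau T)=\sum_{l=0}^d(-1)^l{d \choose l}\xi(t-l\tau T)$ into $B_{NT}\xi$, shifting $s=t-l\tau T$ in each summand, and collecting the coefficient of $\xi(s)$; the defining relation $\sum_{k}d(k)x^{k}=(\sum_{j}x^{j})^{d}$ then forces the interior coefficient to equal $a(s)$ via the convolution identity $\sum_{l}(-1)^l{d \choose l}d(m-l)=\delta_{m,0}$, while the leftover terms on $[-\tau Td;0)$ assemble into $v^{\tau,N}$.
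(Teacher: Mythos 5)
Your truncation argument is correct and is exactly the derivation the paper intends: the statement is presented as a corollary of Lemma~\ref{lema predst A_cont} with no separate proof, the point being that extending $a$ by zero beyond $(N+1)T$ collapses the infinite sums in \eqref{koef b_cont} and \eqref{koefv_cont} to the finite sums \eqref{koef_N b_cont} and \eqref{koefv_N_cont}. Your bookkeeping at the upper index (replacing $d$ by $\min\{[((N+1)T-t)/(\tau T)],d\}$ because the discarded terms vanish) is consistent, so nothing is missing.
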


Let $\widehat{A}\xi$ denote the mean square optimal linear estimate of the functional ${A}\xi$ from observations of the
process $\xi(t)$ at points $ t < 0$ and let $\widehat{B}\xi$ denote the mean
square optimal linear estimate of the functional ${B}\xi$ from
observations of the stochastic dth increment process
$\xi^{(d)}(t,\tau T)$ at points $ t < 0$.

Let
$
\Delta(f;\widehat{A}\xi)=\mt E |A\xi-\widehat{A}\xi|^2
$ and
$
\Delta(f;\widehat{B}\xi)=\mt E |B\xi-\widehat{B}\xi|^2
$
denote the mean square errors of the estimates $\widehat{A}\xi$ and $\widehat{B}\xi$ respectively/\.
Since values $\xi(t)$ at points $t \in [-\tau T n; 0)$ are
known, the following equality comes from Lemma \ref{lema predst A_cont}:
\be\label{mainformula}
    A\xi=B\xi-V\xi,\quad \widehat{A}\xi=\widehat{B}\xi-V\zeta,\ee
and
\[
    \Delta\ld(f;\widehat{A}\xi\rd)=\mt E \ld|A\xi-\widehat{A}\xi\rd|^2= \mt
    E\ld|B\xi-V\xi-\widehat{B}\xi+V\xi\rd|^2
    =\mt E\ld|B\xi-\widehat{B}\xi\rd|^2=\Delta\ld(f;\widehat{B}\xi\rd).
   \]
The  similar relations for the functional $A_{NT}\xi$ are valid:
 \be\label{mainformulaANT}
    A_{NT}\xi=B_{NT}\xi-V_{NT}\xi, \quad \widehat{A}_{NT}\xi=\widehat{B}_{NT}\xi-V_{NT}\zeta,\ee
and
\begin{eqnarray*}
    \Delta\ld(f;\widehat{A}_{NT}\xi\rd)&=&\mt E \ld|A_{NT}\xi-\widehat{A}_{NT}\xi\rd|^2= \mt
    E\ld|B_{NT}\xi-V_{NT}\xi-\widehat{B}_{NT}\xi+V_{NT}\xi\rd|^2
    \\&=&\mt E\ld|B_{NT}\xi-\widehat{B}_{NT}\xi\rd|^2=\Delta\ld(f;\widehat{B}_{NT}\xi\rd).
    \end{eqnarray*}

Thus, to find  the mean square optimal linear  estimation  of the functionals $A\xi$, $A_{NT}\xi$
 we have to find estimates of the functionals $B\xi$, $B_{NT}\xi$.

The functional $B_{NT}\xi$ can be represented in the form
 \[
    B_{NT}\xi=\int_0^{(N+1)T} b^{\tau,N}(t)\xi^{(d)}(t,\tau T)dt
= \sum_{j=0}^{N}\int_{0}^{T}
b^{\tau,N}_j(u)\xi^{(d)}_j(u)
du,
\]
where
\[
b^{\tau,N}_j(u)=b^{\tau,N}(u+jT),\quad \xi^{(d)}_j(u)=\xi^{(d)}_j(u+jT,\tau T),\,\, u\in [0,T).
\]

Making use of the decomposition (\ref{zeta}) of the generated stationary increment sequence  $\{\xi^{(d)}_j,j\in\mathbb Z\}$  and the solution of equation
\begin{equation} \label{peretv}
(-1)^k\left[\frac{k}{2}\right]+(-1)^m\left[\frac{m}{2}\right]=0
\end{equation}
of two variables  $(k,m)$, which is given by pairs  $(1,1)$, $(2l+1,2l)$ and
$(2l,2l+1)$ for $l=2,3,\dots$, the functional $B_{NT}\xi$ can be rewritten in the form
\cite{MoklyachukGolichenko2016}
\begin{eqnarray*}
    B_{NT}\xi&=&\int_0^{(N+1)T} b^{\tau,N}(t)\xi^{(d)}(t,\tau T)dt
= \sum_{j=0}^{N}\int_{0}^{T}
b^{\tau,N}_j(u)\xi^{(d)}_j(u)
du
\\
&=&\sum_{j=0}^N\frac{1}{T}\int_{0}^{T}\left(\sum_{k=1}^\infty b^{\tau,N}_{kj}
e^{2\pi i\{(-1)^k\left[\frac{k}{2}\right]\}u/T} \right)
\left(\sum_{m=1}^\infty \xi^{(d)}_{mj} e^{2\pi
i\{(-1)^m\left[\frac{m}{2}\right]\}u/T} \right)du
\\
&=&\sum_{j=0}^N\sum_{k=1}^\infty \sum_{m=1}^\infty b^{\tau,N}_{kj} \xi^{(d)}_{mj}
\frac{1}{T}\int_{0}^{T}e^{2\pi
i\left\{(-1)^k\left[\frac{k}{2}\right]+(-1)^m\left[\frac{m}{2}\right]\right\}u/T}
du
\\
&=&\sum_{j=0}^N\sum_{k=1}^{\infty} b^{\tau,N}_{kj}\xi^{(d)}_{kj}=
 \sum_{j=0}^N
{(\vec{b}^{\tau,N}_j)}^{\top}\vec{\xi}^{(d)}_j=B_{N}\vec \xi,
\end{eqnarray*}
where the infinite dimensional vectors $\vec{\xi}^{(d)}_j$ and $\vec{b}^{\tau,N}_j$ are defined as follows:
\[
\vec{\xi}^{(d)}_j=(\xi^{(d)}_{kj},k=1,2,\dots)^{\top},
\]
and
\[
\vec{b}^{\tau,N}_j=(b^{\tau,N}_{kj},k=1,2,\dots)^{\top}=
(b^{\tau,N}_{1j},b^{\tau,N}_{3j},b^{\tau,N}_{2j},\dots,b^{\tau,N}_{2k+1,j},b^{\tau,N}_{2k,j},\dots)^{\top}.
\]
Here
\[
 b^{\tau,N}_{kj}=\langle b^{\tau,N}_j,\widetilde{e}_k\rangle =
\frac{1}{\sqrt{T}} \int_0^T b^{\tau,N}_j(v)e^{-2\pi
i\{(-1)^k\left[\frac{k}{2}\right]\}v/T}dv,
\]
where
\[
b^{\tau,N}_j(u)=\sum_{l=0}^{\left[\frac{{(N+1)T}-u-jT}{\tau T}\right]}a( u+jT +\tau T l)d(l)= D^{\tau T,N} a(u),\,
u\in[0;t), \, j=0,1,\ldots,N,
\]
and
\[
b^{\tau,N}_{kj}=\sum_{k=0}^{\left[\frac{N-j}{\tau}\right]}a_{kj+\tau l}d(k),\,
 j=0,1,\ldots,N,
\]
where
\[
 a_{kj}=\langle a_j,\widetilde{e}_k\rangle =
\frac{1}{\sqrt{T}} \int_0^T a_j(v)e^{-2\pi
i\{(-1)^k\left[\frac{k}{2}\right]\}v/T}dv,\quad k=1,2,\dots,\,j=0,1,\ldots,N.
\]
Assume, that coefficients  $\{\vec{b}^{\tau,N}_j, j=0,1,\dots,N\}$, that determine the functional $B_{NT}\xi$, satisfy the conditions
\begin{equation} \label{an}
 \|\vec{b}^{\tau,N}_j\|<\infty, \quad \|\vec{b}^{\tau,N}_j\|^2=\sum_{k=1}^\infty |{b}^{\tau,N}_{kj}|^2,\quad    j=0,1,\dots,N.
\end{equation}
It follows from the condition (\ref{an}) that the functional
\[B_{NT}\xi=
\sum_{j=0}^N\sum_{k=1}^{\infty} b^{\tau,N}_{kj}\xi^{(d)}_{kj}=
 \sum_{j=0}^N
{(\vec{b}^{\tau,N}_j)}^{\top}\vec{\xi}^{\,(d)}_j=B_{N}\vec \xi
\]
has a finite second moment.

Let us calculate the greatest value of the mean-square error  $\Delta(\xi,\hat{B}_{NT})$ of estimate  $\hat B_{NT}\xi$ of the functional $B_{NT}\xi$.
Denote by $\Lambda$  the set of all linear estimates of the functional $B_{NT}\xi$,  based on an observation of the process  $\xi(t)$
at points   $t<0$.

Let  $\mathbf{Y}$ denote the class of the mean-square continuous PC increment processes $\xi^{(d)}(t, \tau T)$,   such that
$\textsf{E}\xi^{(d)}(t, \tau T)=0, \textsf{E} |\xi^{(d)}(t, \tau T)|^2\leq P_{\xi^{d}}$.
Let  $\mathbf{Y}_R$ denote the class of all regular stationary sequences, that satisfy the condition  $\|\xi^{(d)}_j\|^2_H\leq
P_{\xi^{(d)}}$. Then we can formulate the following theorem (see \cite{Moklyachuk:1981,Moklyachuk:2008}).

\begin{thm} \label{111}
Let the coefficients $\{\vec{b}^{\tau,N}_j, j=0,1,\dots,N\}$,  which determine the functional $B_{NT}\xi$,  satisfy condition (\ref{an}). The function $\Delta(\xi,\hat{B}_{NT})$  has a saddle point on the set $\mathbf{Y} \times \Lambda$:
\[
 \mathop {\min }\limits_{\hat B_{NT} \in \Lambda} \mathop
{\max }\limits_{\zeta \in \mathbf{Y}} \Delta(\xi,\hat{B}_{NT})=
\mathop {\max }\limits_{\zeta \in \mathbf{Y}} \mathop {\min}\limits_{\hat B_{NT} \in \Lambda}
\Delta(\xi,\hat{B}_{NT})
=P_\xi
\cdot \nu_{N}^2,
\]
where $\nu_{N}^2$ is the greatest eigenvalue of the self-adjoint compact operator  $Q_{N}$ in the space $\ell_2$,  determined  by the matrix $\{Q_N(p,q)\}_{p,q=0}^D$, which is constructed from the block-matrices $Q_N(p,q)=\{Q_{kn}^D(p,q)\}_{k,n=1}^\infty$   with elements
\begin{equation} \label{qn}
Q_{kn}^{N}(p,q)=\sum_{s=0}^{min(N-p,N-q)}b^{\tau,N}_{k,s+p}\cdot
\overline{b^{\tau,N}_{n,s+q}},
\end{equation}
\[ k,n=1,2,\dots, \quad p,q=0,1,\dots,N.\]
The least favorable stochastic sequence $\vec{\xi}^{(d)}_j$ in the class $\mathbf{Y}$  for the optimal estimate of the functional  $B_{NT}\xi$  is an one-sided moving average sequence of order $N$ of the form
\[
\vec{\xi}^{(d)}_j= \sum_{s=j-N}^j
 g(j-s) \vec \varepsilon(s),\]
where $g_N=(g(0),g(1),\dots,g(N))^{\top}$ is the eigenvector, that corresponds to  $\nu_N^2$, which is constructed from matrices  $g(p)=\{g_{km}(p)\}_{k=\overline{1,\infty}}^{m=\overline{1,M}}$  and is determined by the condition $\|g_N\|^2=\sum _{p=0}^D
 \|g(p)\|^2=P_\zeta$,
$\vec  \varepsilon(s)=\{\varepsilon_{m}(s)\}_{m=1}^M$ is a vector valued stationary stochastic sequence with orthogonal values.
\end{thm}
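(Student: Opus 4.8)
The plan is to reduce the saddle-point assertion to a variational problem for the quadratic form generated by the operator $Q_N$, exploiting the innovation (one-sided moving average) representation of a regular stationary increment sequence. First I would fix an arbitrary regular sequence $\vec{\xi}^{(d)}_j\in\mathbf{Y}_R$ and write it in its Wold form $\vec{\xi}^{(d)}_j=\sum_{s=-\infty}^{j}g(j-s)\vec{\varepsilon}(s)$, where $\{\vec{\varepsilon}(s)\}$ are the orthonormal innovations whose one-sided spans reproduce the observation subspace generated by $\xi(t)$, $t<0$. Substituting this into $B_{NT}\xi=\sum_{j=0}^{N}(\vec{b}^{\tau,N}_j)^{\top}\vec{\xi}^{(d)}_j$ and splitting the innovations into the observable ones ($s<0$) and the unobservable ones ($s\geq0$), the optimal linear estimate $\widehat{B}_{NT}\xi\in\Lambda$ is the orthogonal projection onto the observation span, so it reproduces the observable part exactly and the minimal error equals the squared $H$-norm of the unobservable part.

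Second, I would compute this residual norm. Since $0\leq s\leq j\leq N$ for every surviving innovation, only the matrix coefficients $g(0),\dots,g(N)$ enter; collecting terms by the innovation index and using the orthonormality of $\{\vec{\varepsilon}(s)\}$ yields
\[
\min_{\widehat{B}_{NT}\in\Lambda}\Delta(\xi,\widehat{B}_{NT})
=\sum_{p,q=0}^{N}\langle Q_N(p,q)g(q),g(p)\rangle=\langle Q_N g_N,g_N\rangle ,
\]
with $Q_N(p,q)$ exactly the block matrix (\ref{qn}). This identifies $Q_N$ as a self-adjoint, nonnegative operator on $\ell_2$, and condition (\ref{an}) together with the kernel property of $R_{\xi^{(d)}}$ makes it trace class, hence compact, so its greatest eigenvalue $\nu_N^2$ is attained.

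Third, I would perform the maximization over $\mathbf{Y}_R$. For a regular sequence with orthonormal innovations the variance budget reads $\sum_{r=0}^{\infty}\|g(r)\|^2\leq P_\xi$, but since the error depends only on $g(0),\dots,g(N)$, the supremum is attained by assigning the entire budget to these coefficients, i.e. by a one-sided moving average of order $N$ with $\|g_N\|^2=\sum_{p=0}^{N}\|g(p)\|^2=P_\xi$; any mass placed on $g(r)$ with $r>N$ would only feed observable innovations and waste variance. The Rayleigh-quotient characterization of the largest eigenvalue then gives $\max\langle Q_N g_N,g_N\rangle=P_\xi\,\nu_N^2$, attained at the eigenvector $g_N$ associated with $\nu_N^2$, which is precisely the least favorable sequence described in the statement.

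Finally, I would verify the saddle-point equality itself. The weak inequality $\max_{\zeta}\min_{\widehat{B}}\Delta\leq\min_{\widehat{B}}\max_{\zeta}\Delta$ is automatic, so it remains to exhibit the estimate $\widehat{B}^{*}_{NT}$ optimal for the least favorable moving average and show that, with $\widehat{B}^{*}_{NT}$ held fixed, its worst-case error over all of $\mathbf{Y}$ equals $P_\xi\,\nu_N^2$, which supplies the reverse inequality. I expect the main obstacle to be this last step: one must check that no admissible process in the larger class $\mathbf{Y}$ of mean-square-continuous increment processes (not merely in $\mathbf{Y}_R$) produces a larger error against the fixed $\widehat{B}^{*}_{NT}$, and that passing from $\mathbf{Y}$ to the generated regular sequences in $\mathbf{Y}_R$ leaves the supremum unchanged; this again reduces to the same quadratic form dominated by $\nu_N^2$. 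The abstract extremal-problem results of Moklyachuk \cite{Moklyachuk:1981,Moklyachuk:2008} furnish the minimax machinery that closes this gap and certifies the common value $P_\xi\cdot\nu_N^2$.
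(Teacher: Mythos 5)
The paper states this theorem without any proof of its own, merely citing Moklyachuk \cite{Moklyachuk:1981,Moklyachuk:2008}, and your reconstruction is precisely the standard argument from those sources: Wold (innovation) representation, projection onto the past-innovation span, reindexing of the residual sum to obtain the quadratic form $\sum_{p,q=0}^{N}\langle Q_N(p,q)g(q),g(p)\rangle$ with exactly the blocks (\ref{qn}), and the Rayleigh-quotient maximization under the variance budget. Your computation checks out, and your closing remark correctly identifies the only step that genuinely needs the cited machinery, namely that the supremum over the full class $\mathbf{Y}$ is not larger than the one attained within the regular sequences $\mathbf{Y}_R$.
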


The similar representations can be obtained for the   functional $B\xi$   \cite{MoklyachukGolichenko2016}:
 \[
 B\xi=\int_{0}^{\infty}b^{\tau}(t)\xi^{(d)}(t,\tau T)dt=
 \sum_{j=0}^{\infty}
{(\vec{b}^{\tau}_j)}^{\top}\vec{\xi}^{(d)}_j=B\vec\xi,
\]
where the infinite dimensional vectors $\vec{\xi}^{(d)}_j$ and $\vec{b}^{\tau}_j$ are defined as follows:
\begin{eqnarray*}
\vec{\xi}^{(d)}_j&=&(\xi^{(d)}_{kj},k=1,2,\dots)^{\top},
\\
\vec{b}^{\tau}_j&=&(b^{\tau}_{kj},k=1,2,\dots)^{\top}=
(b^{\tau}_{1j},b^{\tau}_{3j},b^{\tau}_{2j},\dots,b^{\tau}_{2k+1,j},b^{\tau}_{2k,j},\dots)^{\top}.
\end{eqnarray*}
Here
\begin{eqnarray*}
 b^{\tau}_{kj}&=&\langle b^{\tau}_j,\widetilde{e}_k\rangle =
\frac{1}{\sqrt{T}} \int_0^T b^{\tau}_j(v)e^{-2\pi
i\{(-1)^k\left[\frac{k}{2}\right]\}v/T}dv,
\\
   b^{\tau}_j(u)&=&\sum_{l=0}^{\infty}a(u+jT+\tau T l)d(l)=D^{\tau} a_j(u) ,\quad u\in [0,T), \, j=0,1,\ldots,
\end{eqnarray*}
and
\begin{eqnarray*}
    b^{\tau}_{k,j}&=&\sum_{l=0}^{\infty}a_{k,j+\tau l}d(l),\quad k,j=0,1,\ldots,
\\
 a_{kj}&=&\langle a_j,\widetilde{e}_k\rangle =
\frac{1}{\sqrt{T}} \int_0^T a_j(v)e^{-2\pi
i\{(-1)^k\left[\frac{k}{2}\right]\}v/T}dv.
\end{eqnarray*}

Assume, that coefficients  $\{\vec{b}^{\tau}_j, j=0,1,\dots\}$, that determine the functional $B\xi$, satisfy the conditions
\begin{equation} \label{ant}
 \sum_{j=0}^\infty\|
 \vec{b}^{\tau}_j\|<\infty, \quad
 \sum_{j=0}^\infty(j+1)\|
 \vec{b}^{\tau}_j\|<\infty, \quad
  \|\vec{b}^{\tau}_j\|^2=\sum_{k=1}^\infty |{b}^{\tau}_{kj}|^2.
\end{equation}
It follows from the condition (\ref{ant}) that the functional
\[B\xi=
\sum_{j=0}^{\infty}\sum_{k=1}^{\infty} b^{\tau}_{kj}\xi^{(d)}_{kj}=
 \sum_{j=0}^{\infty}
{(\vec{b}^{\tau}_j)}^{\top}\vec{\xi}^{\,(n)}_j
\]
has a finite second moment. The following theorem provides the greatest value of the mean-square error  $\Delta(\xi,\hat{B})$ of the estimate  $\hat B\xi$ of the functional $B\xi$ (see \cite{Moklyachuk:1981,Moklyachuk:2008}).

\begin{thm} \label{11122}
Let the coefficients $\{\vec{b}^{\tau}_j, j=0,1,\dots\}$,  which determine the functional $B\xi$,  satisfy condition (\ref{ant}). The function $\Delta(\xi,\hat{B})$  has a saddle point on the set $\mathbf{Y} \times \Lambda$:
\[
 \mathop {\min }\limits_{\hat B \in \Lambda} \mathop
{\max }\limits_{\zeta \in \mathbf{Y}} \Delta(\xi,\hat{B})=
\mathop {\max }\limits_{\zeta \in \mathbf{Y}} \mathop {\min}\limits_{\hat B\in \Lambda}
\Delta(\xi,\hat{B})
=P_\xi
\cdot \nu^2,
\]
where $\nu^2$ is the greatest eigenvalue of the self-adjoint compact operator  $Q$ in the space $\ell_2$,  determined  by the matrix $\{Q(p,q)\}_{p,q=0}^{\infty}$, which is constructed from the block-matrices $Q(p,q)=\{Q_{kn}(p,q)\}_{k,n=1}^\infty$   with elements
\begin{equation} \label{qn}
Q_{kn}(p,q)=\sum_{s=0}^{\infty}b^{\tau}_{k,s+p}\cdot\overline{b^{\tau}_{n,s+q}},
\end{equation}
\[ k,n=1,2,\dots, \quad p,q=0,1,\dots\]
The least favorable stochastic sequence $\vec{\xi}^{(d)}_j$ in the class $\mathbf{Y}$  for the optimal estimate of the functional  $B\xi$  is an one-sided moving average sequence of the form
\[
\vec{\xi}^{(d)}_j= \sum_{s=-\infty}^j
 g(j-s) \vec \varepsilon(s),\]
where $g=(g(0),g(1),\dots)^{\top}$ is the eigenvector, that corresponds to  $\nu^2$, which is constructed from matrices  $g(p)=\{g_{km}(p)\}_{k=\overline{1,\infty}}^{m=\overline{1,M}}$  and is determined by the condition $\|g\|^2=\sum _{p=0}^{\infty}
 \|g(p)\|^2=P_\zeta$,
$\vec  \varepsilon(s)=\{\varepsilon_{m}(s)\}_{m=1}^M$ is a vector valued stationary stochastic sequence with orthogonal values.
\end{thm}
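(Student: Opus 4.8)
The plan is to treat Theorem \ref{11122} as the infinite-horizon ($N\to\infty$) counterpart of Theorem \ref{111} and to reduce the estimation of $B\xi=B\vec\xi=\sum_{j=0}^{\infty}(\vec b_j^{\tau})^{\top}\vec\xi_j^{(d)}$ to a Rayleigh-quotient problem for the operator $Q$. First I would fix a regular sequence from $\mathbf{Y}_R$ and write it in its one-sided moving-average (innovation) form $\vec\xi_j^{(d)}=\sum_{s=-\infty}^{j}g(j-s)\vec\varepsilon(s)$, where $\vec\varepsilon(s)=\{\varepsilon_m(s)\}_{m=1}^M$ is an orthonormal white noise and the $g(p)=\{g_{km}(p)\}$ are the coefficient matrices. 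Substituting into $B\vec\xi$ and interchanging the order of summation, the coefficient of $\vec\varepsilon(s)$ for $s\geq 0$ is $\sum_{p=0}^{\infty}(\vec b_{s+p}^{\tau})^{\top}g(p)$; this part is orthogonal to the observed past $\{\vec\xi_j^{(d)},\,j<0\}$ and therefore cannot be recovered, while the best estimate reproduces the $s<0$ part exactly. Consequently the optimal mean-square error is
\[
\min_{\hat B\in\Lambda}\Delta(\xi,\hat B)=\sum_{m=1}^{M}\sum_{p,q=0}^{\infty}\sum_{k,n=1}^{\infty}Q_{kn}(p,q)\,g_{km}(p)\,\overline{g_{nm}(q)}=\sum_{m=1}^{M}\langle Q\vec g_m,\vec g_m\rangle,
\]
where $\vec g_m$ collects $\{g_{km}(p)\}$. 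The identification of the inner $s$-summation with the entries $Q_{kn}(p,q)=\sum_{s=0}^{\infty}b_{k,s+p}^{\tau}\overline{b_{n,s+q}^{\tau}}$ is the computational heart of this step.

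Next I would perform the outer maximization over $\mathbf{Y}_R$. The variance constraint $\|\vec\xi_j^{(d)}\|_H^2=\sum_{p}\|g(p)\|^2\leq P_{\xi^{(d)}}$ becomes $\|g\|^2=\sum_m\|\vec g_m\|^2\leq P_{\xi^{(d)}}$, so maximizing the quadratic form above under this single norm bound is exactly the variational characterization of the largest eigenvalue of the self-adjoint operator $Q$. Concentrating all the mass along a top eigenvector gives
\[
\max_{\zeta\in\mathbf{Y}_R}\min_{\hat B\in\Lambda}\Delta(\xi,\hat B)=P_{\xi^{(d)}}\cdot\nu^2,\qquad \nu^2=\|Q\|,
\]
and the maximizing $g=(g(0),g(1),\dots)^{\top}$, normalized by $\|g\|^2=P_{\xi^{(d)}}$, is precisely the least favorable moving-average sequence in the statement. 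For this to be legitimate I must first check that $Q$ is a well-defined, self-adjoint, compact (indeed Hilbert--Schmidt) operator on $\ell_2$, so that the supremum is attained at a genuine eigenvector; this is where conditions \eqref{ant}, in particular $\sum_{j}\|\vec b_j^{\tau}\|<\infty$ and $\sum_{j}(j+1)\|\vec b_j^{\tau}\|<\infty$, are used, since they force absolute convergence of the series defining $Q_{kn}(p,q)$ and finiteness of $\sum_{p,q,k,n}|Q_{kn}(p,q)|^2$ (self-adjointness $Q_{kn}(p,q)=\overline{Q_{nk}(q,p)}$ being immediate from the definition).

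Finally I would upgrade this maximin identity to a genuine saddle point on $\mathbf{Y}\times\Lambda$. The inequality $\max_\zeta\min_{\hat B}\Delta\leq\min_{\hat B}\max_\zeta\Delta$ is automatic, so it remains to exhibit one estimate whose worst-case error over all of $\mathbf{Y}$ does not exceed $P_{\xi^{(d)}}\nu^2$. I would take $\hat B_0$ to be the optimal estimate for the least favorable sequence $\zeta_0$ found above, expressed through the observed past by inverting its causal moving-average representation, and verify $\Delta(\zeta,\hat B_0)\leq P_{\xi^{(d)}}\nu^2$ for every $\zeta\in\mathbf{Y}$; together with $\Delta(\zeta_0,\hat B_0)=\min_{\hat B}\Delta(\zeta_0,\hat B)=P_{\xi^{(d)}}\nu^2$ this identifies $(\zeta_0,\hat B_0)$ as a saddle point.

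The main obstacle is exactly this last bound: one must control the error of the \emph{fixed} linear estimate $\hat B_0$ uniformly over the entire variance-constrained class $\mathbf{Y}$, not merely over the regular sequences, and at the same time justify the passage to the infinite-dimensional, infinite-horizon regime. I expect two routes to be available, and would pursue whichever is cleaner: either a direct spectral/operator bound showing that the residual functional attached to $\hat B_0$ has norm $\nu^2$, or a limiting argument taking $N\to\infty$ in Theorem \ref{111} and checking that $Q_N\to Q$ with convergence of the top eigenvalue $\nu_N^2\to\nu^2$ (again guaranteed by \eqref{ant}). The extension from $\mathbf{Y}_R$ to $\mathbf{Y}$ then follows because any process with bounded increment variance can be approximated, in the quadratic form that governs the error, by regular sequences, so the extremal value is unchanged.
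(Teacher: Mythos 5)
The paper does not actually prove Theorem \ref{11122}: it is stated as a known result with a pointer to \cite{Moklyachuk:1981,Moklyachuk:2008}, so there is no in-text argument to compare against. Your proposal reconstructs precisely the argument used in those references: write a regular sequence in its one-sided moving-average (Wold) form, observe that the part of $B\vec\xi$ carried by the innovations $\vec\varepsilon(s)$, $s\geq 0$, is orthogonal to the observed past while the $s<0$ part is reproducible, identify the resulting minimal error with the quadratic form $\sum_m\langle Q\vec g_m,\vec g_m\rangle$ whose kernel is exactly $Q_{kn}(p,q)=\sum_{s\ge 0}b^{\tau}_{k,s+p}\overline{b^{\tau}_{n,s+q}}$, and then maximize under $\sum_p\|g(p)\|^2\leq P$ to obtain $P\cdot\nu^2$ via the Rayleigh quotient. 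Your use of condition (\ref{ant}) is also the right one: the bound $\sum_{p,q}\bigl(\sum_s\|\vec b^{\tau}_{s+p}\|\,\|\vec b^{\tau}_{s+q}\|\bigr)\leq\bigl(\sum_j\|\vec b^{\tau}_j\|\bigr)\bigl(\sum_j(j+1)\|\vec b^{\tau}_j\|\bigr)$ is what makes $Q$ trace class, hence compact with an attained top eigenvalue, and this is exactly why the second summability condition in (\ref{ant}) appears. The only part you leave genuinely unfinished is the upgrade from the maximin identity to a saddle point, i.e.\ the uniform bound $\Delta(\zeta,\hat B_0)\leq P\cdot\nu^2$ over all of $\mathbf{Y}$ rather than only $\mathbf{Y}_R$; you correctly flag this and both routes you name (a direct operator-norm bound on the residual functional of the fixed estimate, or $N\to\infty$ in Theorem \ref{111} with $\nu_N^2\to\nu^2$) are viable, but neither is carried out, so on this point your text is a plan rather than a proof. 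Since the paper itself supplies nothing here, your proposal is, if anything, more informative than the source; it is correct in approach and matches the cited derivation.
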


\subsection{Classical extrapolation problem}

Let the spectral density $f(\lambda)$ satisfy the \emph{minimality condition}
\be
 \ip \text{Tr}\left[ \frac{\lambda^{2d}}{|1-e^{i\lambda\tau}|^{2d}}f^{-1}(\lambda)\right]
 d\lambda<\infty.
\label{umova11_e_c}
\ee
This is the necessary and sufficient condition under which the mean square errors of the estimates of the functionals $A\vec\xi$ and $A_N\vec\xi$ are not equal to $0$.

Consider the Hilbert space $H=L_2(\Omega,\mathcal{F},\mt P)$ of random variables $\gamma$ with zero mean value, $\mt E\gamma=0$, finite variance, $\mt E|\gamma|^2<\infty$, and endowed with the inner product $(\gamma_1;\gamma_2)=\mt E\gamma_1\overline{\gamma_2}$. With the generated increment sequence $\{\xi^{(d)}_j\}$, we can associate  the closed linear subspace  $H^{0-}(\vec\xi^{(d)})=\overline{span}\{\xi^{(d)}_{kj}:k=1,2,\ldots;j=-1,-2,-3,\ldots\}$.

Consider also  the closed linear subspace  \[L_2^{0-}(f)=\overline{span}\{ e^{i\lambda j}(1-e^{-i\lambda \tau})^d (i\lambda)^{-d} {\delta}_{n}:n=1,2,\ldots,\infty, \, j \leqslant -1\}, \quad {\delta}_n=\{\delta_{kn}\}_{k=1}^{\infty},
\]
of the Hilbert space $L_2(f)$ of vector-valued functions, endowed with the inner product $\langle g_1;g_2\rangle=\ip (g_1(\lambda))^{\top}f(\lambda)\overline{g_2(\lambda)}d\lambda$.
Here $\delta_{kn}$ are Kronecker symbols.
The relation
\begin{equation}\label{spectr_zobr_zeta}
 \xi^{(d)}_{kj}=\ip e^{i\lambda
 j}(1-e^{-i\lambda\tau})^d\dfrac{1}{(i\lambda)^{d}}dZ_k(\lambda), \quad k=1,2,\dots,
\end{equation}
implies  a map between  the subspaces spaces
$H^{0-}(\vec\xi^{(d)})$ and  $L_2^{0-}(f)$. Here $Z_k(\lambda)$ are the components of a vector-valued stochastic process $\vec{Z}_{\xi^{(d)}}(\lambda)=\{Z_{ k}(\lambda)\}_{k=1}^{\infty}$ with uncorrelated increments on $[-\pi,\pi)$   connected with the spectral function $F(\lambda)$ by
the relation
\[
 \mt E(Z_{k}(\lambda_2)-Z_{k}(\lambda_1))(\overline{ Z_{n}(\lambda_2)-Z_{n}(\lambda_1)})
 =F_{kn}(\lambda_2)-F_{kn}(\lambda_1)\]
for
 \[  -\pi\leq \lambda_1<\lambda_2<\pi,\quad k,n=1,2,\dots,\infty.
\]

Define the Fourier coefficients of the function $\lambda^{2d} |1-e^{i\lambda\tau}|^{-2d}f^{-1}(\lambda)$
\[
 F^{\tau}_{j,l}=\frac{1}{2\pi}\ip
 e^{-i\lambda(j-l)}\frac{\lambda^{2d}}{|1-e^{i\lambda\tau}|^{2d}}f^{-1}(\lambda)d\lambda,\quad j,l\geq0.
\]

Define  the vector
\[
\vec{a}_j=(a_{kj},k=1,2,\dots)^{\top}=
(a_{1j},a_{3j},a_{2j},\dots,a_{2k+1,j},a_{2k,j},\dots)^{\top}.
\]
Then  Lemma \ref{lema predst A_cont} implies
\be \label{a_b_relation}
\vec{b}_j=\sum_{m=j}^{\infty}\mt{diag}_{\infty}(d_{\tau}(m-j))\vec{a}_m=(D^{\tau}{\me a})_j,  \quad j=0,1,2,\dots,
\ee
where
$\me a=((\vec{a}_0)^{\top},(\vec{a}_1)^{\top},(\vec{a}_2)^{\top}, \ldots)^{\top}$, the coefficients $\{d_{\tau}(k):k\geq0\}$ are determined by the relationship
\[
 \sum_{k=0}^{\infty}d_{\tau}(k)x^k=\left(\sum_{j=0}^{\infty}x^{\tau j}\right)^d,\]
$D^{\tau}$ is a linear transformation  determined by a matrix with infinite dimension matrix entries
 $D^{\tau}(k,j), k,j=0,1,2,\dots$ such that $D^{\tau}(k,j)=\mt{diag}_{\infty}(d_{\tau}(j-k))$ if $0\leq k\leq j $ and $D^{\tau}(k,j)=\mt{diag}_{\infty}(0)$ for $0\leq j<k$; $\mt{diag}_{\infty}(x)$ denotes an infinite dimension diagonal matrix with the entry $x$ on its diagonal.

\begin{thm}
\label{thm_est_A}
Let a stochastic
process $x(t)$, $t\in \mr R$ with a periodically stationary  increments determine a
generated stationary  $d$th increment sequence
$\vec{\xi}^{(d)}_m$ with the spectral density matrix $f(\lambda)=\{f_{ij}(\lambda)\}_{i,j=1}^{\infty}$ satisfying  minimality condition (\ref{umova11_e_c}).
Let the coefficients $\vec {a}_j, j\geqslant 0$ generated by the function $a(t)$, $t\geq 0$, satisfy conditions  (\ref{ant}) taking into account (\ref{a_b_relation}).
The optimal linear estimate $\widehat{A}\xi$ of the functional $A\xi$ based on observations of the process
$\xi(t)$ on the interval $t<0$ is calculated by the formula
\be \label{otsinka A_e_c}
 \widehat{A}\xi=\ip
(\vec{h}_{\tau}(\lambda))^{\top}d\vec{Z}_{\xi^{(d)}}(\lambda)-\int_{-\tau T d}^{0}v^{\tau}(t)\xi(t)dt,
\ee
 where
$\vec{h}_{\tau}(\lambda)=\{h_{k}(\lambda)\}_{k=1}^{\infty}$ is the spectral characteristic of the estimate $\widehat{A}\vec\xi$.
The spectral characteristic
$\vec h_{\tau}(\lambda)$ and the value of the mean square error $\Delta(f;\widehat{A}\xi)$ of the optimal estimate $\widehat{A}\xi$ are calculated by the formulas
\begin{equation}\label{spectr A_e_c}
 (\vec{h}_{\tau}(\lambda))^{\top}=(\vec{B}_{\tau}(e^{i\lambda}))^{\top}
\frac{(1-e^{-i\lambda\tau})^d}{(i\lambda)^d}-
\frac{(-i\lambda)^{n}
}
{(1-e^{i\lambda\tau})^d}(
\vec{C}_{\tau}(e^{i \lambda}))^{\top}f^{-1}(\lambda).
\end{equation}
 and
\begin{eqnarray}
\nonumber
\Delta\ld(f;\widehat{A}\xi\rd)
&=& \Delta\ld(f;\widehat{B}\xi\,\rd)= \mt E\ld|B\xi-\widehat{B}\xi\,\rd|^2
\\
\nonumber
&=&\frac{1}{2\pi}\int_{-\pi}^{\pi}\frac{(-i\lambda)^d}{(1-e^{i\lambda\tau})^d}
(\vec{C}_{\tau}(e^{i \lambda}))^{\top}f(\lambda)\overline{
\vec{C}_{\tau}(e^{i \lambda})}
\frac{(i\lambda)^d}
{(1-e^{-i\lambda\tau})^d}d\lambda\\
&=&\ld\langle D^{\tau}\me a,\me F_{\tau}^{-1}D^{\tau}\me a\rd\rangle.\label{pohybka}
\end{eqnarray}
 respectively, where
\[
 \vec B_{\tau}(e^{i\lambda})=\sum_{j=0}^{\infty}\vec{b}^{\tau}_je^{i\lambda j},\quad \vec{C}_{\tau}(e^{i \lambda})=\sum_{j=0}^{\infty}(\me F_{\tau}^{-1}D^{\tau}\me a )_j e^{i\lambda j},\]
  $\me F_{\tau}$ is a linear operator in the space $\ell_2$ which is determined by a matrix with the infinite matrix entries $(\me F_{\tau})_{j,l}=F_{j,l}^{\tau}$, $j,l\geq0$.
\end{thm}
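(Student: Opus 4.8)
The plan is to reduce the estimation of $A\xi$ to the estimation of $B\xi$ and then to solve the latter as a Hilbert-space projection problem transferred to the frequency domain. First, by Lemma \ref{lema predst A_cont} together with \eqref{mainformula}, the functional splits as $A\xi=B\xi-V\xi$, where $V\xi=\int_{-\tau Td}^{0}v^{\tau}(t)\xi(t)dt$ depends only on the observed values $\xi(t)$, $t\in[-\tau Td,0)$. Hence $\widehat{A}\xi=\widehat{B}\xi-V\xi$ and $\Delta(f;\widehat{A}\xi)=\Delta(f;\widehat{B}\xi)$, so the second summand in \eqref{otsinka A_e_c} is exactly $-V\xi$ and it remains only to compute $\widehat{B}\xi$, the orthogonal projection of $B\xi$ onto the subspace $H^{0-}(\vec\xi^{(d)})$ spanned by the observations.

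Next I would pass to the spectral domain. Using the isometry induced by \eqref{spectr_zobr_zeta} between $H^{0-}(\vec\xi^{(d)})$ and $L_2^{0-}(f)$, I write $B\xi=\ip(\vec{B}_{\tau}(e^{i\lambda}))^{\top}\beta(\lambda)\,d\vec{Z}_{\xi^{(d)}}(\lambda)$ with $\beta(\lambda)=(1-e^{-i\lambda\tau})^{d}(i\lambda)^{-d}$, and look for $\widehat{B}\xi=\ip(\vec{h}_{\tau}(\lambda))^{\top}d\vec{Z}_{\xi^{(d)}}(\lambda)$ subject to $\vec{h}_{\tau}\in L_2^{0-}(f)$, noting that $|\beta(\lambda)|^{2}=|1-e^{i\lambda\tau}|^{2d}\lambda^{-2d}$. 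The projection theorem supplies two conditions: the error $\vec{B}_{\tau}^{\top}\beta-\vec{h}_{\tau}^{\top}$ must be orthogonal in $L_2(f)$ to every generator $e^{i\lambda l}\beta\,\delta_{n}$ with $l\leq-1$, and $\vec{h}_{\tau}$ itself must lie in $L_2^{0-}(f)$.

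Writing out the first (orthogonality) condition shows that $(\vec{B}_{\tau}^{\top}\beta-\vec{h}_{\tau}^{\top})f\overline{\beta}$ has only non-negative Fourier coefficients, hence equals an analytic row function $\vec{C}_{\tau}^{\top}$; solving for $\vec{h}_{\tau}^{\top}$ then yields the spectral characteristic \eqref{spectr A_e_c}. To pin down $\vec{C}_{\tau}$ I would use the second condition: $\vec{h}_{\tau}\in L_2^{0-}(f)$ is equivalent to the vanishing of the non-negative-frequency part of $\vec{h}_{\tau}^{\top}/\beta$, that is, $\vec{B}_{\tau}^{\top}$ must coincide with the analytic part of $\lambda^{2d}|1-e^{i\lambda\tau}|^{-2d}\vec{C}_{\tau}^{\top}f^{-1}$. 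Expanding this identity in the Fourier coefficients $F_{j,l}^{\tau}$ of $\lambda^{2d}|1-e^{i\lambda\tau}|^{-2d}f^{-1}$ turns it into the block linear system $\me{F}_{\tau}\me{c}=D^{\tau}\me a$, whose solution $\me c=\me{F}_{\tau}^{-1}D^{\tau}\me a$ is precisely the coefficient sequence defining $\vec{C}_{\tau}=\sum_{j=0}^{\infty}(\me{F}_{\tau}^{-1}D^{\tau}\me a)_j e^{i\lambda j}$ through \eqref{a_b_relation}. The mean square error then follows by inserting the error characteristic $\overline{\beta}^{-1}\vec{C}_{\tau}^{\top}f^{-1}$ into the $L_2(f)$-norm and simplifying with $f^{-1}ff^{-1}=f^{-1}$, which collapses the integral to the quadratic form $\langle D^{\tau}\me a,\me{F}_{\tau}^{-1}D^{\tau}\me a\rangle$ of \eqref{pohybka}.

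The main obstacle will be the rigorous treatment of the operator $\me{F}_{\tau}$: one must verify that $\me{F}_{\tau}$ is a well-defined and boundedly invertible operator on $\ell_2$ and that the formal analytic-projection manipulations converge in the infinite-dimensional, operator-valued setting. This is exactly where the minimality condition \eqref{umova11_e_c} and the summability conditions \eqref{ant} (read through \eqref{a_b_relation}) are indispensable: the factor $(i\lambda)^{-d}$ creates a singularity at $\lambda=0$, so the integrability of $\lambda^{2d}|1-e^{i\lambda\tau}|^{-2d}f^{-1}(\lambda)$ is what makes the projection onto $L_2^{0-}(f)$ well posed and guarantees that the resulting error is finite and nonzero.
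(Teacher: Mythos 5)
Your proposal reproduces the paper's own argument essentially step for step: the same reduction $A\xi=B\xi-V\xi$ via Lemma \ref{lema predst A_cont}, the same two projection conditions \eqref{umova1}--\eqref{umova2} (with the orthogonality condition yielding the form of $\vec h_{\tau}$ through the analytic factor $\vec C_{\tau}$, and the membership condition yielding the block system $\me F_{\tau}\me c^{\tau}=D^{\tau}\me a$), and the same justification of the invertibility of $\me F_{\tau}$ from uniqueness of the projection under the minimality condition \eqref{umova11_e_c}. This matches the paper's proof; no gaps.
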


\begin{proof}
 Representation (\ref{otsinka A_e_c}) for the estimate $\widehat{A}\xi$ comes from relation (\ref{mainformula}).
The functional $B\vec{\xi}$ allows the spectral representation
\[
B\vec{\xi}=\int_{-\pi}^{\pi}\left(\vec{B}_{\tau}(e^{i\lambda})\right)^{\top} \frac{(1-e^{-i\tau \lambda})^d}{(i\lambda)^d}d\vec{Z}_{\xi^{(d)}}(\lambda).
 \]
Thus, the spectral characteristic $\vec{h}_{\tau}(\lambda)$ is characterized by the following conditions, which determine a projection of the element
$\vec B_{\tau}(e^{i\lambda })\frac{(1-e^{-i\lambda\tau})^d }{(i\lambda)^d}$ on the subspace $L_2^{0-}(f)$:
\begin{equation}
\label{umova1}
\vec{h}_{\tau}(\lambda)\in L_2^{0-}\ld(f\rd),
\end{equation}
\begin{equation}
\label{umova2}
\ld(\vec{B}_{\tau}(e^{i\lambda})\frac{(1-e^{-i\lambda\tau})^d }{(i\lambda)^d}-\vec{h}_{\tau}(\lambda)\rd)\perp
L_2^{0-}\ld(f\rd).
\end{equation}

From the condition \eqref{umova2}, we obtain the following
relation which holds true for all $  j\leqslant -1$
\begin{gather}
\label{umova2_2}
 \int_{-\pi}^{\pi}\left(\vec{B}_{\tau}(e^{i\lambda}) \frac{(1-e^{-i\tau \lambda})^d}{(i\lambda)^d}-\vec{h}_{\tau}(\lambda)\right)^{\top} f(\lambda)e^{-ij\lambda}\frac{(1-e^{i\tau \lambda})^d}{(-i\lambda)^d} d\lambda=0.
\end{gather}

Thus, the spectral characteristic of the estimate
$\widehat{B}\vec\xi$ can be represented in the form
\[ (\vec{h}_{\tau}(\lambda))^{\top}=(\vec{B}_{\tau}(e^{i\lambda}))^{\top}
\frac{(1-e^{-i\lambda \tau})^d}{(i\lambda)^d}-
\frac{(-i\lambda)^{n}(\vec{C}_{\tau}(e^{i\lambda}))^{\top}}{(1-e^{i\lambda \tau})^d}f^{-1}(\lambda),\]
where\[\vec{C}_{\tau}(e^{i \lambda})=\sum_{j=0}^{\infty}\vec{c}^{\,\tau}_je^{ij\lambda},\]
coefficients $\vec{c}_j^{\,\tau}=\{c_{kj}^{\tau}\}_{k=1}^{\infty}, j\geqslant 0$  are unknown and to be found.

Condition \eqref{umova1}  implies the equestions
\begin{equation} \label{eq_C}
\int_{-\pi}^{\pi} \left[(\vec{B}_{\tau}(e^{i\lambda}))^{\top}-
\frac{\lambda^{2d}(\vec{C}_{\tau}(e^{i\lambda}))^{\top}}{(1-e^{-i\lambda \tau})^d (1-e^{i\lambda \tau})^d}f^{-1}(\lambda)\right]e^{-ij\lambda}d\lambda=0,  j\geqslant 0.
\end{equation}

\noindent Making use of the introduced Fourier coefficients $F^{\tau}_{j,l}$, relation \eqref{eq_C} can be presented as a system of linear equations
\[
 \vec{b}^{\tau}_l=\sum_{j=0}^{\infty}F_{l,j}^{\tau}\,\,{\vec c}^{\,\tau}_j,\quad l\geq0,
 \]
 allowing the matrix form representation
\be\label{meq-e}
 D^{\tau}\me a=\me F_{\tau}\me c^{\tau},
\ee
where $\me c^{\tau}=(({\vec c}^{\,\tau}_0)^{\top},({\vec c}^{\,\tau}_1)^{\top},({\vec c}^{\,\tau}_2)^{\top}, \ldots)^{\top}$.

Note that the operator $\me F_{\tau}$ is invertible. The   projection problem for the element ${B}\vec\xi \in H$  on the closed convex set $H^{0-}(\vec\xi^{(d)}_{\tau})$ has a unique solution for each non-zero vector coefficients $\{\vec{a}_0,\vec{a}_1,\vec{a}_2, \ldots\}$ under conditions
(\ref{ant}). Therefore, equation (\ref{meq-e}) has a unique solution for each vector $\me b^{\tau}=D^{\tau} \me a$, which implies existence of the inverse operator $\me F^{-1}_{\tau}$.
Now we can conclude, that the coefficients ${\vec c}^{\,\tau}_j$, $j\geq0$,  can be calculated as
\begin{equation}
 \vec c^{\,\tau}_j=\ld(\me F_{\tau}^{-1}D^{\tau}\me a\rd)_j,\quad j\geq 0,
 \end{equation}
 where $\ld(\me F_{\tau}^{-1}D^{\tau}\me a\rd)_j$, $j\geq 0$, is the
 $j$th  infinite dimension vector element of the vector $\me F_{\tau}^{-1}D^{\tau}\me a$.

The derived expressions   and the coefficients ${\vec c}^{\,\tau}_j$, $j\geq0$, prove  formulas (\ref{spectr A_e_c}) for the   spectral characteristic $\vec{h}_{\tau}(\lambda)$
 and (\ref{pohybka}) for the  value of the mean square error of the estimate $\widehat{A}\xi$.
\end{proof}

Consider the extrapolation problem for the functional $A_{NT}\xi$. We can apply the results of Theorem \ref{thm_est_A} by putting $(\vec{a}_j)^{\top}=0$ for $j>N$. Corollary \ref{nas predst A_T_cont} implies
\[
\vec{b}^d_j=\sum_{m=j}^N\mt{diag}_{\infty}(d_{\tau}(m-j))\vec{a}(m)=(D^{\tau}_{N}{\me a}_{N})_j, \, j=0,1,\dots,N,
\]
 where $D^{\tau}_{N}$ is the  linear transformation  determined by an infinite matrix with the entries $(D^{\tau}_{N})(k,j)=\mt{diag}_{\infty}(d_{\tau}(j-k))$ if
$0\leq k\leq j\leq N$, and $(D^{\tau}_{N})(k,j)=0$ if $j<k$ or $j,k>N$; $\me a_N=((\vec{a}_0)^{\top},(\vec{a}_1)^{\top}, \ldots,(\vec{a}_N)^{\top},\vec 0 \ldots)^{\top}$.

\begin{thm}
\label{thm_est_A_N}
Let a stochastic
process $\xi(t)$, $t\in \mr R$ with a periodically stationary  increments determine a
generated stationary  $d$th increment sequence
$\vec{\xi}^{(d)}_m$ with the spectral density matrix $f(\lambda)=\{f_{ij}(\lambda)\}_{i,j=1}^{\infty}$ satisfying  minimality condition (\ref{umova11_e_c}).
The optimal linear estimate $\widehat{A}_{NT}\xi$ of the functional $A_{NT}\xi$ based on observations of the process
$\xi(t)$ on the interval $t<0$ is calculated by the  formula
\begin{equation} \label{estimation_A_N}
\widehat{A}_{NT}{\xi}=\int_{-\pi}^{\pi}
(\vec{h}_{\tau,N}(\lambda))^{\top}d\vec{Z}_{\xi^{(d)}}(\lambda) - \int_{-\tau T d}^{0}v^{\tau,N}(t)\xi(t)dt,
\end{equation}
The spectral characteristic $\vec{h}_{\tau,N}(\lambda)=\{h_{\tau,N,k}(\lambda)\}_{k=1}^{\infty}$ and the value of the mean square error $\Delta(f;\widehat{A}_N \xi)$
are calculated by the formulas  \begin{multline} \label{est_h_N}
(\vec{h}_{\tau,N}(\lambda))^{\top}=(\vec{B}_{\tau,N}(e^{i\lambda}))^{\top}
\frac{(1-e^{-i\lambda \tau})^d}{(i\lambda)^d}-
\frac{(-i\lambda)^{n}
}
{(1-e^{i\lambda \tau})^d}(
\vec{C}_{\tau,N}(e^{i \lambda}))^{\top}f^{-1}(\lambda).
\end{multline}
and
\begin{eqnarray}
\nonumber
\Delta (f, \widehat{A}_{NT}{\xi})&=&\Delta (f, \widehat{B}_{NT}{\xi})={\mathsf E}|B_{NT}{\xi}-\widehat{B}_{NT}{\xi}|^2 \\
\nonumber
&=&\frac{1}{2\pi}\int_{-\pi}^{\pi}\frac{(-i\lambda)^d
}
{(1-e^{i\lambda\tau})^d}(
\vec{C}_{\tau,N}(e^{i \lambda}))^{\top}f(\lambda)\overline{
\vec{C}_{\tau,N}(e^{i \lambda})}
\frac{(i\lambda)^d}
{(1-e^{-i\lambda\tau})^d}d\lambda\\
\label{pohybka_N}
&=&\ld\langle D^{\tau}_{N}{\me a}_N,\me F_{\tau,N}^{-1}D^{\tau}_{N}{\me a}_N\rd\rangle.
\end{eqnarray}
respectively, where
\[
\vec B_{\tau,N}(e^{i\lambda })=\sum_{j=0}^{N}(D^{\tau}_N{\me a}_N)_je^{i\lambda j},\quad
\vec{C}_{\tau,N}(e^{i \lambda})=\sum_{j=0}^{\infty}(\me F_{\tau,N}^{-1}D^{\tau}_{N}{\me a}_N )_j e^{i\lambda j}
\]
$\me F_{\tau,N}$ is a linear operator in the space $\ell_2$
which is determined by a matrix with the infinite  matrix entries
 $(\me F_{\tau,N})_{l, m} =F^{\tau}_{l,m}$, $l\geq0$, $0\leq m\leq N$, and $(\me F_{\tau,N})_{l, m} =0$, $l\geq0$, $m>N$.
\end{thm}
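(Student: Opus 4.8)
The plan is to reproduce the argument of Theorem~\ref{thm_est_A} almost verbatim, since the finite-interval functional $A_{NT}\xi$ arises from $A\xi$ merely by truncating the generating coefficients, that is by setting $(\vec a_j)^{\top}=0$ for $j>N$. First I would invoke Corollary~\ref{nas predst A_T_cont} together with relation~\eqref{mainformulaANT} to reduce the estimation of $A_{NT}\xi$ to that of $B_{NT}\xi$: because $V_{NT}\xi$ is assembled from the observed values $\xi(t)$, $t\in[-\tau T d;0)$, it is known, hence $\widehat A_{NT}\xi=\widehat B_{NT}\xi-V_{NT}\xi$ and the two mean square errors coincide. This yields the structure of~\eqref{estimation_A_N} at once and shows that only the spectral characteristic of $\widehat B_{NT}\vec\xi$ remains to be computed.

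Next I would write the spectral representation $B_{NT}\vec\xi=\ip(\vec B_{\tau,N}(e^{i\lambda}))^{\top}\frac{(1-e^{-i\tau\lambda})^d}{(i\lambda)^d}\,d\vec Z_{\xi^{(d)}}(\lambda)$, where now $\vec B_{\tau,N}(e^{i\lambda})=\sum_{j=0}^{N}(D^{\tau}_N\me a_N)_je^{i\lambda j}$ is a finite sum, only the first $N+1$ blocks entering $B_{NT}$. The optimal estimate is the projection of this element onto the observation subspace $L_2^{0-}(f)$, which is unchanged from Theorem~\ref{thm_est_A} since the observation set $t<0$ is the same. I would therefore impose the two defining conditions, the analogues of~\eqref{umova1} and~\eqref{umova2}: membership $\vec h_{\tau,N}(\lambda)\in L_2^{0-}(f)$ and orthogonality of $\vec B_{\tau,N}(e^{i\lambda})\frac{(1-e^{-i\lambda\tau})^d}{(i\lambda)^d}-\vec h_{\tau,N}(\lambda)$ to $L_2^{0-}(f)$.

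From the orthogonality condition I would extract, exactly as in the passage leading to~\eqref{umova2_2}, the representation~\eqref{est_h_N} with an unknown power series $\vec C_{\tau,N}(e^{i\lambda})=\sum_{j\geq0}\vec c_j^{\,\tau,N}e^{ij\lambda}$. The membership condition, after passing to the Fourier coefficients $F^{\tau}_{l,j}$ of $\lambda^{2d}|1-e^{i\lambda\tau}|^{-2d}f^{-1}(\lambda)$ as in~\eqref{eq_C}, then becomes the linear system $\vec b_l^{\tau,N}=\sum_{j}F^{\tau}_{l,j}\vec c_j^{\,\tau,N}$, $l\geq0$, which in matrix form reads $D^{\tau}_N\me a_N=\me F_{\tau,N}\,\me c^{\tau,N}$, the analogue of~\eqref{meq-e}; the truncated operator $\me F_{\tau,N}$ is the one appropriate here because the right-hand side $D^{\tau}_N\me a_N$ is supported on the first $N+1$ blocks. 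Solving $\me c^{\tau,N}=\me F_{\tau,N}^{-1}D^{\tau}_N\me a_N$ and substituting into~\eqref{est_h_N} delivers the stated spectral characteristic, while evaluating $\mt E|B_{NT}\xi-\widehat B_{NT}\xi|^2$ as the squared $L_2(f)$-norm of the residual $\vec B_{\tau,N}(e^{i\lambda})\frac{(1-e^{-i\lambda\tau})^d}{(i\lambda)^d}-\vec h_{\tau,N}(\lambda)$ produces~\eqref{pohybka_N}.

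The step I expect to demand the most care is the invertibility of $\me F_{\tau,N}$ and the convergence of the series defining $\vec C_{\tau,N}$. As in Theorem~\ref{thm_est_A}, I would deduce invertibility from the uniqueness of the projection of $B_{NT}\vec\xi$ onto the closed subspace $H^{0-}(\vec\xi^{(d)})$, which is secured by the minimality condition~\eqref{umova11_e_c} and by conditions~\eqref{ant} inherited through~\eqref{a_b_relation}; the truncated column structure of $\me F_{\tau,N}$ does not disturb this argument, as the finitely supported input $D^{\tau}_N\me a_N$ still determines a unique element of the observation subspace. Everything else is a finite-$N$ specialization of computations already performed for $A\xi$.
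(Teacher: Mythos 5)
Your proposal is correct and follows essentially the same route as the paper: the paper gives no separate proof of this theorem but derives it precisely by setting $(\vec a_j)^{\top}=0$ for $j>N$ and rerunning the projection argument of Theorem \ref{thm_est_A}, which is exactly the specialization you carry out (reduction to $B_{NT}\xi$ via Corollary \ref{nas predst A_T_cont}, the two projection conditions, the linear system $D^{\tau}_N\me a_N=\me F_{\tau,N}\,\me c^{\tau,N}$, and the residual-norm computation of the error). Your added attention to the invertibility of the truncated operator $\me F_{\tau,N}$ is a reasonable elaboration of a point the paper leaves implicit.
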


\section{Minimax (robust) method of extrapolation}

The values of the mean square errors and the spectral characteristics of the optimal estimates
of the functionals ${A}\vec\xi$ and ${A}_{NT}\vec\xi$
depending on the unobserved values of a stochastic process ${\xi}(t)$, which determine a periodically stationary stochastic    increment process
${\xi}^{(d)}(t,\tau T)$ or the corresponding to it infinite dimension vector stationary increment sequence
  $\{\vec\xi^{(d)}_j=(\xi^{(d)}_{kj}, k=1,2,\dots)^{\top}$, $
j\in\mathbb Z\}$,
with the spectral density matrix $f(\lambda)$,
 can be calculated by formulas
(\ref{spectr A_e_c}), (\ref{pohybka}) and
\eqref{est_h_N}, \eqref{pohybka_N}
respectively, provided the spectral density
$f(\lambda)$ of the stochastic process $\vec\xi(t)$ is exactly known.
If the spectral density is not known, but a set $\md D$ of admissible spectral densities is defined,
the minimax method of estimation of the functionals depending on unobserved values of stochastic sequences with stationary increments may be applied.
This method consists in finding an estimate that minimizes
the maximal values of the mean square errors for all spectral densities
from a given class $\mathcal D$ of admissible spectral densities
simultaneously. The described method is formalized by the two following definitions \cite{Moklyachuk:2008}.

\begin{ozn}
For a given class of spectral densities $\mathcal{D}$ the
spectral density $f_0(\lambda)\in\mathcal{D}$ is called the least
favourable in $\mathcal{D}$ for the optimal linear estimation of the
functional $A\vec \xi$ if the following relation holds true:
\[\Delta(f_0)=\Delta(h_{\tau}(f_0);f_0)=\max_{f\in\mathcal{D}}\Delta(h_{\tau}(f);f).
\]
\end{ozn}

\begin{ozn}
 For a given class of spectral densities $\mathcal{D}$
the spectral characteristic $h^0(\lambda)$ of the optimal linear estimate of the functional
$A \xi$ is called minimax-robust if the following conditions are satisfied
\[h^0(\lambda)\in
H_{\mathcal{D}}=\bigcap_{f\in\mathcal{D}}L_2^{0-}(f),
\]
\[
\min_{h\in H_{\mathcal{D}}}\max_{f\in
\mathcal{D}}\Delta(h;f)=\sup_{f\in\mathcal{D}}\Delta(h^0;f).
\]
\end{ozn}

Taking into account the introduced definitions and the derived relations we can verify that the following lemmas hold true.

\begin{lema}
The spectral density $f_0(\lambda)\in\mathcal{D}$ satisfying the minimality condition (\ref{umova11_e_c}) is the least favourable density in the class $\mathcal{D}$ for the optimal linear extrapolation of the functional $A\xi$ based on observations of the process $\xi(t)$ on the interval $t<0$ if the operator $\me F_{\tau}^0$ defined by
the Fourier coefficients of the function
\be\label{functions_for_lemma_e_c}
 \lambda^{2d}|1-e^{i\lambda\tau}|^{-2d}f_0^{-1}(\lambda),
 \ee
determines a solution to the constrained optimization problem
\be
 \max_{f\in \mathcal{D}}\ld(\ld\langle D^{\tau}\me a,\me F_{\tau}^{-1}D^{\tau}\me a\rd\rangle\rd)= \ld\langle D^{\tau}\me a,(\me F^0_{\tau})^{-1}D^{\tau}\me a\rd\rangle.
\label{minimax1_e_c}
\ee
The minimax spectral characteristic $h^0=h_{\tau}(f^0)$ is calculated by formula (\ref{spectr A_e_c}) if
$h_{\tau}(f^0)\in H_{\mathcal{D}}$.
\end{lema}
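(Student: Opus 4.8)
The plan is to read the least-favourable assertion directly off Theorem \ref{thm_est_A} and then to establish the minimax characteristic by exhibiting a saddle point of the mean square error $\Delta(h;f)$ on $H_{\mathcal{D}}\times\mathcal{D}$ at the pair $(h^0,f_0)$ with $h^0=h_\tau(f_0)$. Theorem \ref{thm_est_A} gives, for every $f\in\mathcal{D}$ obeying the minimality condition (\ref{umova11_e_c}), the minimal error over all admissible estimates $\Delta(f)=\ld\langle D^{\tau}\me a,\me F_{\tau}^{-1}D^{\tau}\me a\rd\rangle$, attained at $h_\tau(f)$ of (\ref{spectr A_e_c}). By the definition of the least favourable density, $f_0$ is least favourable exactly when it maximizes $\Delta(f)$ over $\mathcal{D}$, which is precisely (\ref{minimax1_e_c}); so this half of the lemma is the definition combined with the theorem, and the work lies in the minimax characteristic.

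First I would record the trivial saddle inequality. As $h^0$ is optimal for $f_0$ (Theorem \ref{thm_est_A}) and $H_{\mathcal{D}}\subseteq L_2^{0-}(f_0)$, we have $\Delta(f_0)=\Delta(h^0;f_0)\le\Delta(h;f_0)$ for all $h\in H_{\mathcal{D}}$. The substance is the reverse inequality $\Delta(h^0;f)\le\Delta(f_0)$ for every $f\in\mathcal{D}$. Substituting (\ref{spectr A_e_c}) at $f_0$, the residual of the fixed characteristic $h^0$ equals $\tfrac{(-i\lambda)^d}{(1-e^{i\lambda\tau})^d}f_0^{-1}(\lambda)\vec C_\tau(e^{i\lambda})$ with $\vec C_\tau$ built from $\me c^{\tau}=\me F_{\tau}^{-1}D^{\tau}\me a$, so the error of $h^0$ under an \emph{arbitrary} density $f$ is the functional, linear in $f$,
\[
\Delta(h^0;f)=\frac{1}{2\pi}\ip \frac{\lambda^{2d}}{|1-e^{i\lambda\tau}|^{2d}}(\vec C_\tau(e^{i\lambda}))^{\top}f_0^{-1}(\lambda)f(\lambda)f_0^{-1}(\lambda)\overline{\vec C_\tau(e^{i\lambda})}\,d\lambda,
\]
which reduces to the value $\Delta(f_0)$ in (\ref{pohybka}) when $f=f_0$.

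The hard part is turning the maximization property (\ref{minimax1_e_c}) into this pointwise bound; this is the convex-analytic core. I would use convexity of $\mathcal{D}$: for fixed $f\in\mathcal{D}$ the segment $f_\eps=(1-\eps)f_0+\eps f$ lies in $\mathcal{D}$, so maximality of $f_0$ forces $\tfrac{d}{d\eps}\Delta(f_\eps)\big|_{\eps=0^+}\le 0$. Differentiating the operator inverse through $\tfrac{d}{d\eps}f_\eps^{-1}\big|_{\eps=0}=-f_0^{-1}(f-f_0)f_0^{-1}$, using $\me F_\tau^{-1}D^\tau\me a=\me c^\tau$, and passing from the $\ell_2$ quadratic form to its integral form by Parseval for the coefficients $F^{\tau}_{j,l}$, I would identify this one-sided derivative with $\Delta(h^0;f)-\Delta(f_0)$ exactly; its nonpositivity gives $\Delta(h^0;f)\le\Delta(f_0)$. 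This completes the saddle point, whence
\[
\min_{h\in H_{\mathcal{D}}}\max_{f\in\mathcal{D}}\Delta(h;f)=\max_{f\in\mathcal{D}}\min_{h}\Delta(h;f)=\Delta(f_0)=\sup_{f\in\mathcal{D}}\Delta(h^0;f),
\]
so $h^0=h_\tau(f_0)$ from (\ref{spectr A_e_c}) is the minimax-robust characteristic whenever $h_\tau(f_0)\in H_{\mathcal{D}}$. The technical obstacles I expect are justifying differentiation of $\eps\mapsto\ld\langle D^\tau\me a,\me F_\tau(f_\eps)^{-1}D^\tau\me a\rd\rangle$ under the integral sign --- invertibility of $\me F_\tau$ along the whole segment being secured by the minimality condition via Theorem \ref{thm_est_A} --- and matching the infinite-dimensional operator expression with its spectral counterpart.
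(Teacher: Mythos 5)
The paper does not actually prove this lemma: it is asserted with the remark that it can be ``verified'' from the definitions and the derived relations, with the mechanism deferred to the saddle-point discussion that follows it (reduction to the linearized extremal problem (\ref{zad_um_extr_e_c}) and the subdifferential condition $0\in\partial\Delta_{\mathcal{D}}(f_0)$, citing \cite{Moklyachuk:2008}). Your argument is correct in outline and supplies precisely the bridge the paper leaves implicit: the first half of the lemma is, as you say, just the definition of a least favourable density combined with the error formula (\ref{pohybka}) of Theorem \ref{thm_est_A}; the substantive half is showing that a maximizer $f_0$ of the \emph{nonlinear} functional $f\mapsto\ld\langle D^{\tau}\me a,\me F_{\tau}^{-1}D^{\tau}\me a\rd\rangle$ also maximizes the \emph{linear} functional $f\mapsto\Delta(h_{\tau}(f_0);f)$, which is what the saddle-point inequalities require. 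Your directional-derivative computation does exactly this: differentiating $\me F_{\tau}(f_\eps)^{-1}$ along the segment $f_\eps=(1-\eps)f_0+\eps f$, using $\me c^{\tau}=\me F_{\tau}^{-1}D^{\tau}\me a$ and Parseval, the one-sided derivative at $\eps=0$ is exactly $\Delta(h^0;f)-\Delta(h^0;f_0)$, and its nonpositivity (from maximality) gives the hard inequality $\Delta(h^0;f)\le\Delta(f_0)$; crucially you compute the derivative from the explicit closed form rather than from an envelope inequality, which would only give a one-sided bound in the wrong direction. Three hypotheses are used that the lemma does not state explicitly and that you should make visible: (i) convexity of $\mathcal{D}$, without which $f_\eps\in\mathcal{D}$ fails and the whole argument collapses (it holds for every class $\md D_0^k$, $\md D_{1\delta}^k$ in the paper); (ii) validity of the minimality condition (\ref{umova11_e_c}) along the segment, which you correctly note follows from $f_\eps\ge(1-\eps)f_0$; and (iii) the differentiability of $\eps\mapsto\me F_{\tau}(f_\eps)^{-1}$ in the relevant operator topology, which you flag but do not resolve — this is the one genuinely technical point, and in the literature it is handled either by norm-continuity of $\eps\mapsto\me F_{\tau}(f_\eps)$ on $\ell_2$ under (\ref{umova11_e_c}) or by the alternative convexity argument $\Delta(f_0)\ge(1-\eps)\Delta(h_\eps;f_0)+\eps\Delta(h_\eps;f)\ge(1-\eps)\Delta(f_0)+\eps\Delta(h_\eps;f)$ followed by a limit $h_\eps\to h^0$, which avoids differentiation altogether. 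With these points stated, your proof is a complete and faithful substitute for the argument the paper outsources to its references.
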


For more detailed analysis of properties of the least favorable spectral densities and the minimax-robust spectral characteristics, we observe that the minimax spectral characteristic $h^0$ and the least favourable spectral density $f_0$ form a saddle
point of the function $\Delta(h;f)$ on the set
$H_{\mathcal{D}}\times\mathcal{D}$ \cite{Moklyachuk:2008}.
The saddle point inequalities
\[
 \Delta(h;f_0)\geq\Delta(h^0;f_0)\geq\Delta(h^0;f)\quad\forall f\in
 \mathcal{D},\forall h\in H_{\mathcal{D}}\]
hold true if $h^0=h_{\tau}(f_0)$,
$h_{\tau}(f_0)\in H_{\mathcal{D}}$ and $f_0$ is a solution of the constrained optimization problem
\be
 \widetilde{\Delta}(f)=-\Delta(h_{\tau}(f_0);f)\to
 \inf,\quad f\in \mathcal{D},\label{zad_um_extr_e_c}
 \ee
where the functional $\Delta(h_{\tau}(f_0);f)$ is calculated by the formula
\[
 \Delta(h_{\tau}(f_0);f)
=\frac{1}{2\pi}\int_{-\pi}^{\pi}
\frac{(-i\lambda)^d
}
{(1-e^{i\lambda\tau})^d}(\vec{C}_{\tau}^0(e^{i \lambda})
)^{\top}
f_0^{-1}(\lambda)
f(\lambda)
 f_0^{-1}(\lambda)\overline{
\vec{C}_{\tau}^0(e^{i \lambda})}
\frac{(i\lambda)^d}
{(1-e^{-i\lambda\tau})^d}d\lambda
\]
where
\[
\vec{C}_{\tau}^0(e^{i \lambda})=\sum_{j=0}^{\infty}((\me F^0_{\tau})^{-1}D^{\tau}\me a )_j e^{i\lambda j}
\].

The constrained optimization problem (\ref{zad_um_extr_e_c}) is equivalent to the unconstrained optimization problem
\[
 \Delta_{\mathcal{D}}(f)=\widetilde{\Delta}(f)+ \delta(f|\mathcal{D})\to\inf,\]
where $\delta(f|\mathcal{D})$ is the indicator function of the set
$\mathcal{D}$, namely $\delta(f|\mathcal{D})=0$ if $f\in \mathcal{D}$ and $\delta(f|\mathcal{D})=+\infty$ if $f\notin \mathcal{D}$.
 A solution $f_0$ of the unconstrained optimization problem is characterized by the condition
 $0\in\partial\Delta_{\mathcal{D}}(f_0)$, which is the necessary and sufficient condition under which the point $f_0$ belongs to the set of minimums of the convex functional $\Delta_{\mathcal{D}}(f)$ \cite{Franke1984,Moklyachuk:2008,Rockafellar}.
 This condition makes it possible to find the least favourable spectral densities in some special classes of spectral densities $D$.

The form of the functional $\widetilde{\Delta}(f)$ allows us to apply the Lagrange method of indefinite
multipliers for investigating the constrained optimization problem (\ref{zad_um_extr_e_c}).

\subsection{Least favorable spectral density in classes $\md D_0$}

Consider the   extrapolation problem for the functional $A{\xi}$
 which depends on unobserved values of a process $\xi(t)$ with periodically stationary increments based on observations of the process at points $t<0$ under the condition that the sets of admissible spectral densities $\md D_0^k,k=1,2,3,4$ are defined as follows:
\begin{eqnarray*}
\md D_{0}^{1} &=& \bigg\{f(\lambda )\left|\frac{1}{2\pi} \int
_{-\pi}^{\pi}
\frac{|1-e^{i\lambda\tau}|^{2d}}{\lambda^{2d}}
f(\lambda )d\lambda  =P\right.\bigg\},
\\
\md D_{0}^{2} &=&\bigg\{f(\lambda )\left|\frac{1}{2\pi }
\int _{-\pi }^{\pi}
\frac{|1-e^{i\lambda\tau}|^{2d}}{\lambda^{2d}}
{\rm{Tr}}\,[ f(\lambda )]d\lambda =p\right.\bigg\},
\\
\md D_{0}^{3} &=&\bigg\{f(\lambda )\left|\frac{1}{2\pi }
\int _{-\pi}^{\pi}
\frac{|1-e^{i\lambda\tau}|^{2d}}{|\lambda|^{2d}}
f_{kk} (\lambda )d\lambda =p_{k}, k=1,2,3,\ldots\right.\bigg\},
\\
\md D_{0}^{4} &=&\bigg\{f(\lambda )\left|\frac{1}{2\pi} \int _{-\pi}^{\pi}
\frac{|1-e^{i\lambda\tau}|^{2d}}{|\lambda|^{2d}}
\left\langle B_{1} ,f(\lambda )\right\rangle d\lambda  =p\right.\bigg\},
\end{eqnarray*}
where  $p, p_k, k=1,2,3,\ldots$ are given numbers, $P, B_1,$ are given positive-definite Hermitian matrices.

From the condition $0\in\partial\Delta_{\mathcal{D}}(f_0)$
we find the following equations which determine the least favourable spectral densities for these given sets of admissible spectral densities.

For the   set $\md D_{0}^{1}$ of admissible spectral densities, we have equation
\be  \label{eq_4_1}
\left(
\vec{C}_{\tau}^0(e^{i \lambda})
\right)
\left(
\vec{C}_{\tau}^0(e^{i \lambda})
\right)^{*}=\left(\frac{|1-e^{i\lambda\tau}|^{2d}}{\lambda^{2d}} f_{0} (\lambda )\right)
\vec{\alpha}\cdot \vec{\alpha}^{*}\left(\frac{|1-e^{i\lambda\tau}|^{2d}}{\lambda^{2d}} f_{0} (\lambda )\right),
\ee
where $\vec{\alpha}$ is a vector of Lagrange multipliers.

For the   set $\md D_{0}^{2}$ of admissible spectral densities, we have equation
\begin{equation}  \label{eq_4_2}
\left(
\vec{C}_{\tau}^0(e^{i \lambda})
\right)
\left(\vec{C}_{\tau}^0(e^{i \lambda})
\right)^{*}=
\alpha^{2} \left(\frac{|1-e^{i\lambda\tau}|^{2d}}{\lambda^{2d}} f_{0} (\lambda )\right)^{2},
\end{equation}
where $\alpha^{2}$ is a Lagrange multiplier.

For the   set $\md D_{0}^{3}$ of admissible spectral densities, we have equation
\be   \label{eq_4_3}
\left(
\vec{C}_{\tau}^0(e^{i \lambda})
\right)
\left(\vec{C}_{\tau}^0(e^{i \lambda})
\right)^{*}
=
\left(\frac{|1-e^{i\lambda\tau}|^{2d}}{\lambda^{2d}} f_{0} (\lambda )\right)\left\{\alpha _{k}^{2} \delta _{kl} \right\}_{k,l=1}^{\infty} \left(\frac{|1-e^{i\lambda\tau}|^{2d}}{\lambda^{2d}} f_{0} (\lambda )\right),
\ee
where  $\alpha _{k}^{2}$ are Lagrange multipliers, $\delta _{kl}$ are Kronecker symbols.

For the   set $\md D_{0}^{4}$ of admissible spectral densities, we have equation
\be    \label{eq_4_4}
\left(
\vec{C}_{\tau}^0(e^{i \lambda})
\right)
\left(\vec{C}_{\tau}^0(e^{i \lambda})
\right)^{*}
=
\alpha^{2} \left(\frac{|1-e^{i\lambda\tau}|^{2d}}{\lambda^{2d}} f_{0} (\lambda )\right)B_{1}^{\top} \left(\frac{|1-e^{i\lambda\tau}|^{2d}}{\lambda^{2d}} f_{0} (\lambda )\right),
\ee
where $\alpha^{2}$ is a Lagrange multiplier.

The following theorem  holds true.

\begin{thm}
Let   minimality condition (\ref{umova11_e_c}) hold true. The least favorable spectral densities $f_{0}(\lambda)$ in the classes $ \md  D_0^{k}$, $k=1,2,3,4$, for the optimal linear extrapolation of the functional  $A{\xi}$ from observations of the process ${\xi}(t)$ on the interval $t<0$  are determined by the equations
\eqref{eq_4_1}, \eqref{eq_4_2}, \eqref{eq_4_3}, \eqref{eq_4_4},
the constrained optimization problem (\ref{minimax1_e_c}) and restrictions  on densities from the corresponding classes $ \md  D_0^{k}$, $k=1,2,3,4$.  The minimax-robust spectral characteristic of the optimal estimate of the functional $A{\xi}$ is determined by the formula (\ref{spectr A_e_c}).
\end{thm}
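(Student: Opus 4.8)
The plan is to realise the least favourable density $f_0$ as a solution of the convex constrained optimization problem (\ref{zad_um_extr_e_c}) and to read off the announced equations from the optimality condition $0\in\partial\Delta_{\md D}(f_0)$. Since $\widetilde{\Delta}(f)=-\Delta(h_{\tau}(f_0);f)$ is linear in $f$ and each class $\md D_0^{k}$ is convex, the indicator $\delta(f\,|\,\md D_0^{k})$ is convex and the Moreau--Rockafellar sum rule gives $\partial\Delta_{\md D}(f_0)=\{\nabla\widetilde{\Delta}(f_0)\}+\partial\delta(f_0\,|\,\md D_0^{k})$. The whole argument therefore reduces to computing the two summands explicitly and setting their sum to zero for almost all $\lambda$.

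First I would compute $\nabla\widetilde{\Delta}(f_0)$. Writing $B(\lambda)=|1-e^{i\lambda\tau}|^{2d}/\lambda^{2d}$ and collapsing the scalar factors through $\frac{(-i\lambda)^d(i\lambda)^d}{(1-e^{i\lambda\tau})^d(1-e^{-i\lambda\tau})^d}=B(\lambda)^{-1}$, the functional reads $\widetilde{\Delta}(f)=-\frac{1}{2\pi}\ip B^{-1}(\vec C_{\tau}^0)^{\top}f_0^{-1}f f_0^{-1}\overline{\vec C_{\tau}^0}\,d\lambda$. Under the pairing $\langle G,h\rangle=\frac{1}{2\pi}\ip\tr[G(\lambda)h(\lambda)]\,d\lambda$ the cyclicity of the trace identifies $\nabla\widetilde{\Delta}(f_0)=-B^{-1}f_0^{-1}\overline{\vec C_{\tau}^0}(\vec C_{\tau}^0)^{\top}f_0^{-1}$, a kernel of matrix rank one.

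Next I would describe $\partial\delta(f_0\,|\,\md D_0^{k})$ as the normal cone to $\md D_0^{k}$ at $f_0$. Each class is cut out by moment constraints of the shape $\frac{1}{2\pi}\ip B(\lambda)\,\ell(f(\lambda))\,d\lambda=\mathrm{const}$ together with the implicit positivity $f\succeq0$, so the normal cone consists of the functions $\lambda\mapsto B(\lambda)\,\Gamma$ with a Lagrange multiplier $\Gamma$ dual to the linear map $\ell$: a Hermitian matrix for the matrix moment of $\md D_0^{1}$, a scalar multiple $\alpha^{2}I$ of the identity for the trace moment of $\md D_0^{2}$, a diagonal matrix $\{\alpha_{k}^{2}\delta_{kl}\}$ for the coordinatewise moments of $\md D_0^{3}$, and $\alpha^{2}B_{1}^{\top}$ for the weighted moment of $\md D_0^{4}$. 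Imposing $\nabla\widetilde{\Delta}(f_0)+B\Gamma=0$ and multiplying on both sides by $f_0$ cancels the inverses and yields an identity of the form $\vec C_{\tau}^0(\vec C_{\tau}^0)^{*}=(Bf_0)\,\Gamma\,(Bf_0)$ (the conjugation matching the displays being routine since $f_0=f_0^{*}$); substituting the four shapes of $\Gamma$ reproduces exactly (\ref{eq_4_1})--(\ref{eq_4_4}). For $\md D_0^{1}$ the rank-one form $\Gamma=\vec\alpha\vec\alpha^{*}$ is not assumed but forced: the left-hand side has rank one and $f_0$ is invertible, so $\Gamma=(Bf_0)^{-1}\vec C_{\tau}^0(\vec C_{\tau}^0)^{*}(Bf_0)^{-1}$ is rank one and positive semidefinite.

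Finally, the density is pinned down by combining each stationarity equation with membership in the class $\md D_0^{k}$ and with the extremal relation (\ref{minimax1_e_c}) that fixes the operator $\me F_{\tau}^{0}$; by the saddle-point inequalities recorded just before the theorem this $f_0$ is least favourable, and since $h_{\tau}(f_0)\in H_{\md D}$ the minimax-robust spectral characteristic is given by (\ref{spectr A_e_c}). I expect the principal obstacle to be the rigorous identification of the normal cones $\partial\delta(f_0\,|\,\md D_0^{k})$: handling the operator positivity constraint on the set where $f_0$ degenerates, and justifying that the multiplier inherits the required sign, rank, or diagonal structure rather than being an arbitrary Hermitian operator, is exactly the delicate point that turns the abstract optimality condition into the concrete equations (\ref{eq_4_1})--(\ref{eq_4_4}).
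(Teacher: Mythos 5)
Your proposal follows essentially the same route as the paper: the paper likewise derives equations (\ref{eq_4_1})--(\ref{eq_4_4}) from the condition $0\in\partial\Delta_{\mathcal{D}}(f_0)$ applied to the unconstrained problem $\widetilde{\Delta}(f)+\delta(f|\mathcal{D})\to\inf$, using the linearity of $\Delta(h_{\tau}(f_0);f)$ in $f$ and the Lagrange-multiplier description of the normal cones to the moment constraints, and then invokes the saddle-point inequalities together with (\ref{minimax1_e_c}) to conclude. Your write-up in fact supplies more detail (the explicit gradient computation, the forced rank-one structure of the multiplier for $\mathcal{D}_0^{1}$) than the paper, which states the derivation only in outline, and your closing caveat about the positivity constraint correctly identifies the one point left implicit in both arguments.
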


\subsection{Least favorable spectral density in classes $\md D_{1\delta}$}\label{set1}

Consider the extrapolation problem for the functional $A{\xi}$
 which depends on unobserved values of the process $\xi(t)$ with periodically stationary increments based on its observations   at points $t<0$ under the condition that the sets of admissible spectral densities ${D_{1\delta}^{k}},k=1,2,3,4$ are defined as follows:
\begin{eqnarray*}
\md D_{1\delta}^{1}&=&\left\{f(\lambda )\biggl|\frac{1}{2\pi} \int_{-\pi}^{\pi}
\left|f_{ij} (\lambda )-f_{ij}^{1} (\lambda)\right|d\lambda  \le \delta_{i}^j, i,j=\overline{1,\infty}\right\}.
\\
\md D_{1\delta}^{2}&=&\left\{f(\lambda )\biggl|\frac{1}{2\pi} \int_{-\pi}^{\pi}
\left|{\rm{Tr}}(f(\lambda )-f_{1} (\lambda))\right|d\lambda \le \delta\right\};
\\
\md D_{1\delta}^{3}&=&\left\{f(\lambda )\biggl|\frac{1}{2\pi } \int_{-\pi}^{\pi}
\left|f_{kk} (\lambda )-f_{kk}^{1} (\lambda)\right|d\lambda  \le \delta_{k}, k=\overline{1,\infty}\right\};
\\
\md D_{1\delta}^{4}&=&\left\{f(\lambda )\biggl|\frac{1}{2\pi } \int_{-\pi}^{\pi}
\left|\left\langle B_{2} ,f(\lambda )-f_{1}(\lambda )\right\rangle \right|d\lambda  \le \delta\right\};
\end{eqnarray*}

\noindent
Here  $f_{1} ( \lambda )=\{f_{ij}^{1} ( \lambda )\}_{i,j=1}^{\infty}$ is a fixed spectral density,  $B_2$ is a given positive-definite Hermitian matrix,
$\delta,\delta_{k},k=\overline{1,\infty}$, $\delta_{i}^{j}, i,j=\overline{1,\infty}$, are given numbers.

From the condition $0\in\partial\Delta_{\mathcal{D}}(f_0)$,
we find the following equations which determine the least favourable spectral densities for these given sets of admissible spectral densities.

For the  set $\md D_{1\delta}^{1}$ of admissible spectral densities,   we have equations
\be   \label{eq_5_1g}
\left(
\vec{C}_{\tau}^0(e^{i \lambda})
\right)
\left(\vec{C}_{\tau}^0(e^{i \lambda})
\right)^{*}
=
\left(\frac{|1-e^{i\lambda\tau}|^{2d}}{\lambda^{2d}}f_0(\lambda)\right)
\left \{ \beta_{ij}\gamma_{ij} ( \lambda ) \right \}_{i,j=1}^{\infty}
\left(\frac{|1-e^{i\lambda\tau}|^{2d}}{\lambda^{2d}}f_0(\lambda)\right),
\ee
\begin{equation} \label{eq_5_1c}
\frac{1}{2 \pi} \int_{- \pi}^{ \pi} \left|f_{ij}^0(\lambda)-f_{ij}^{1}( \lambda ) \right|d\lambda = \delta_{i}^{j},
\end{equation}
where  $ \beta_{ij}$ are Lagrange multipliers,  functions $\left| \gamma_{ij} ( \lambda ) \right| \le 1$ and
\[
\gamma_{ij} ( \lambda )= \frac{f_{ij}^0  ( \lambda )-f_{ij}^{1} (\lambda )}{ \left|f_{ij}^0  ( \lambda )-f_{ij}^{1}(\lambda) \right|}: \; f_{ij}^0  ( \lambda )-f_{ij}^{1} ( \lambda ) \ne 0, \; i,j= \overline{1,\infty}.
\]

For the  set $\md D_{1\delta}^{2}$ of admissible spectral densities,   we have equations
\be  \label{eq_5_2g}
\left(
\vec{C}_{\tau}^0(e^{i \lambda})
\right)
\left(\vec{C}_{\tau}^0(e^{i \lambda})
\right)^{*}
=
\beta^{2} \gamma_2( \lambda )\left(\frac{|1-e^{i\lambda\tau}|^{2d}}{\lambda^{2d}}f_0(\lambda)\right)^2,
\ee
\begin{equation} \label{eq_5_2c}
\frac{1}{2 \pi} \int_{-\pi}^{ \pi}
\left|{\mathrm{Tr}}\, (f_0( \lambda )-f_{1}(\lambda )) \right|d\lambda =\delta,
\end{equation}

\noindent where   $ \beta^{2}$ are Lagrange multipliers,   the function $\left| \gamma_2( \lambda ) \right| \le 1$ and
\[\gamma_2( \lambda )={ \mathrm{sign}}\; ({\mathrm{Tr}}\, (f_{0} ( \lambda )-f_{1} ( \lambda ))): \; {\mathrm{Tr}}\, (f_{0} ( \lambda )-f_{1} ( \lambda )) \ne 0.\]

For the  set $\md D_{1\delta}^{3}$ of admissible spectral densities,   we have equations
\be    \label{eq_5_3g}
\left(
\vec{C}_{\tau}^0(e^{i \lambda})
\right)
\left(\vec{C}_{\tau}^0(e^{i \lambda})
\right)^{*}
=
\left(\frac{|1-e^{i\lambda\tau}|^{2d}}{\lambda^{2d}}f_0(\lambda)\right)
\left \{ \beta_{k}^{2} \gamma^2_{k} ( \lambda ) \delta_{kl} \right \}_{k,l=1}^{\infty}
\left(\frac{|1-e^{i\lambda\tau}|^{2d}}{\lambda^{2d}}f_0(\lambda)\right),
\ee
\begin{equation} \label{eq_5_3c}
\frac{1}{2 \pi} \int_{- \pi}^{ \pi}  \left|f_{kk}^0 ( \lambda)-f_{kk}^{1} ( \lambda ) \right| d\lambda =\delta_{k},
\end{equation}

\noindent where   $\beta_{k}^{2}$ are Lagrange multipliers, $\delta _{kl}$ are Kronecker symbols,  functions $\left| \gamma^2_{k} ( \lambda ) \right| \le 1$ and
\[\gamma_{k}^2( \lambda )={ \mathrm{sign}}\;(f_{kk}^0( \lambda)-f_{kk}^{1} ( \lambda )): \; f_{kk}^0( \lambda )-f_{kk}^{1}(\lambda ) \ne 0, \; k= \overline{1,\infty}.\]

For the  set $\md D_{1\delta}^{4}$ of admissible spectral densities,   we have equations
\be  \label{eq_5_4g}
\left(
\vec{C}_{\tau}^0(e^{i \lambda})
\right)
\left(\vec{C}_{\tau}^0(e^{i \lambda})
\right)^{*}
=
\beta^{2} \gamma_2'( \lambda )
\left(\frac{|1-e^{i\lambda\tau}|^{2d}}{\lambda^{2d}}f_0(\lambda)\right)
B_{2}^{ \top}
\left(\frac{|1-e^{i\lambda\tau}|^{2d}}{\lambda^{2d}}f_0(\lambda)\right),
\ee
\begin{equation} \label{eq_5_4c}
\frac{1}{2 \pi} \int_{- \pi}^{ \pi}  \left| \left \langle B_{2}, f_0( \lambda )-f_{1} ( \lambda ) \right \rangle \right|d\lambda
= \delta,
\end{equation}
where   $\beta^{2}$ are Lagrange multipliers,   function
$\left| \gamma_2' ( \lambda ) \right| \le 1$ and
\[\gamma_2' ( \lambda )={ \mathrm{sign}}\; \left \langle B_{2},f_{0} ( \lambda )-f_{1} ( \lambda ) \right \rangle : \; \left \langle B_{2},f_{0} ( \lambda )-f_{1} ( \lambda ) \right \rangle \ne 0.\]

The derived results are summarized in the  following theorem.

\begin{thm}
Let   minimality condition (\ref{umova11_e_c}) hold true. The least favorable spectral densities $f_{0} ( \lambda )=\{f_{ij}^{0} ( \lambda )\}_{i,j=1}^{\infty}$  in the classes $\md D_{1\delta}^{k},k=1,2,3,4$  for the optimal linear extrapolation of the functional  $A{\xi}$ from observations of the process ${\xi}(t)$ at points  $t<0$  are determined by equations
 \eqref{eq_5_1g} -- \eqref{eq_5_1c},   \eqref{eq_5_2g} -- \eqref{eq_5_2c},  \eqref{eq_5_3g} -- \eqref{eq_5_3c},  \eqref{eq_5_4g} -- \eqref{eq_5_4c},
respectively,
the constrained optimization problem (\ref{minimax1_e_c}) and restrictions  on the densities from the corresponding classes $\md D_{1\delta}^{k},k=1,2,3,4$.  The minimax-robust spectral characteristic of the optimal estimate of the functional $A{\xi}$ is determined by the formula (\ref{spectr A_e_c}).
\end{thm}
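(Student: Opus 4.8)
The plan is to build directly on the convex-duality framework assembled immediately before the statement. The crucial structural fact is that, with $f_0$ frozen, the functional $\widetilde{\Delta}(f)=-\Delta(h_{\tau}(f_0);f)$ is \emph{linear} in $f$, hence convex; since each admissible class $\md D_{1\delta}^{k}$ is itself convex, the whole saddle-point problem collapses to the single inclusion $0\in\partial\Delta_{\md D}(f_0)$ with $\Delta_{\md D}(f)=\widetilde{\Delta}(f)+\delta(f\,|\,\md D_{1\delta}^{k})$. By the Moreau--Rockafellar subdifferential sum rule this splits as
\[
0\in\partial\widetilde{\Delta}(f_0)+\partial\delta\ld(f_0\,\big|\,\md D_{1\delta}^{k}\rd),
\]
so I would determine the two summands separately and then equate a subgradient of $\widetilde{\Delta}$ at $f_0$ with an element of the normal cone $\partial\delta(f_0\,|\,\md D_{1\delta}^{k})$.

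First I would put the error functional into trace form. Using $(-i\lambda)^d(i\lambda)^d=\lambda^{2d}$, $(1-e^{i\lambda\tau})^d(1-e^{-i\lambda\tau})^d=|1-e^{i\lambda\tau}|^{2d}$ and the cyclic property of the trace, the scalar quadratic form $(\vec{C}_{\tau}^0)^{\top}f_0^{-1}ff_0^{-1}\overline{\vec{C}_{\tau}^0}$ becomes $\tr[f_0^{-1}\overline{\vec{C}_{\tau}^0}(\vec{C}_{\tau}^0)^{\top}f_0^{-1}f]$, so that
\[
\Delta(h_{\tau}(f_0);f)=\frac{1}{2\pi}\ip\tr\ld[\frac{\lambda^{2d}}{|1-e^{i\lambda\tau}|^{2d}}f_0^{-1}(\lambda)\overline{\vec{C}_{\tau}^0(e^{i\lambda})}\ld(\vec{C}_{\tau}^0(e^{i\lambda})\rd)^{\top}f_0^{-1}(\lambda)\,f(\lambda)\rd]d\lambda.
\]
Because this is linear in $f$, the gradient of $\Delta(h_{\tau}(f_0);\cdot)$ is read off as the matrix kernel multiplying $f(\lambda)$ under the trace, namely $\lambda^{2d}|1-e^{i\lambda\tau}|^{-2d}f_0^{-1}(\vec{C}_{\tau}^0)(\vec{C}_{\tau}^0)^{*}f_0^{-1}$ (up to the obvious transposition convention), and $\partial\widetilde{\Delta}(f_0)$ is its negative.

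The remaining work is to identify the normal cone to each $L_1$-type class at $f_0$. For $\md D_{1\delta}^{1}$ the constraint $\tfrac1{2\pi}\ip|f_{ij}-f_{ij}^{1}|d\lambda\le\delta_i^j$ contributes, entrywise, the subdifferential of an $L_1$ norm: a family $\{\beta_{ij}\gamma_{ij}(\lambda)\}$ with nonnegative Lagrange multipliers $\beta_{ij}$, where $\gamma_{ij}=\mathrm{sign}(f_{ij}^{0}-f_{ij}^{1})$ on the set where the deviation is nonzero and $|\gamma_{ij}|\le1$ where it vanishes. Equating the two terms of the split inclusion and clearing the factors $f_0^{-1}$ by multiplying on both sides by $|1-e^{i\lambda\tau}|^{2d}\lambda^{-2d}f_0$ turns the outer product $f_0^{-1}(\vec{C}_{\tau}^0)(\vec{C}_{\tau}^0)^{*}f_0^{-1}$ into $(\vec{C}_{\tau}^0)(\vec{C}_{\tau}^0)^{*}$ and produces exactly \eqref{eq_5_1g}; complementary slackness, i.e.\ the activeness of the constraint at the least favourable density, supplies the boundary equation \eqref{eq_5_1c}. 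The three remaining classes follow the same scheme, only the form of the constraint gradient changing: the trace functional of $\md D_{1\delta}^{2}$ yields a scalar $\beta^{2}\gamma_2(\lambda)$ times the identity (hence the squared factor in \eqref{eq_5_2g}); the diagonal constraints of $\md D_{1\delta}^{3}$ yield a diagonal multiplier matrix $\{\beta_k^{2}\gamma_k^{2}\delta_{kl}\}$ giving \eqref{eq_5_3g}; and the pairing $\langle B_2,\cdot\rangle$ of $\md D_{1\delta}^{4}$ produces the factor $B_2^{\top}$ in \eqref{eq_5_4g}, with the corresponding boundary equations \eqref{eq_5_2c}, \eqref{eq_5_3c}, \eqref{eq_5_4c}. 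Finally, under minimality condition \eqref{umova11_e_c} the operator $\me F_{\tau}^0$ built from the Fourier coefficients of \eqref{functions_for_lemma_e_c} is invertible, so that $\vec{C}_{\tau}^0$ and $h^0=h_{\tau}(f_0)$ are well defined and given by \eqref{spectr A_e_c}, and one verifies $h^0\in H_{\md D}$ to conclude minimax-robustness.

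The step I expect to be the main obstacle is the rigorous computation of the normal cones, i.e.\ justifying the interchange of the subdifferential with the integral for the $L_1$-type constraint functionals and correctly describing the subgradient on the null set $\{f_{ij}^{0}=f_{ij}^{1}\}$, which is precisely what forces the relaxed bound $|\gamma_{ij}|\le1$ rather than an exact sign. The measurable-selection and integral-subdifferential arguments needed there are the delicate part; the rest reduces to the algebraic rearrangement by the weight $|1-e^{i\lambda\tau}|^{2d}\lambda^{-2d}f_0$ sketched above, together with the already-established saddle-point inequalities that guarantee the computed $f_0$ is genuinely least favourable.
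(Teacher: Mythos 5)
Your proposal follows essentially the same route as the paper: the paper likewise reduces the problem to the inclusion $0\in\partial\Delta_{\mathcal{D}}(f_0)$ for the convex functional $\widetilde{\Delta}(f)+\delta(f\,|\,\md D_{1\delta}^{k})$, exploits the linearity of $\Delta(h_{\tau}(f_0);\cdot)$ in $f$, and obtains equations \eqref{eq_5_1g}--\eqref{eq_5_4c} via the Lagrange multiplier/subdifferential description of the $L_1$-type constraints, exactly as you do. Your write-up merely makes explicit (via the Moreau--Rockafellar sum rule, the trace rewriting, and complementary slackness) the steps the paper leaves as a sketch.
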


\begin{section}{Conclusions}

In this article, we dealt with  continuous time stochastic processes with periodically correlated $d$-th increments.
These stochastic processes form a class of non-stationary stochastic processes
that combine  periodic structure of covariation functions of processes as well as integrating one.

We derived solutions of the problem of estimation of the linear functionals constructed from the  unobserved values of a continuous time stochastic process with periodically correlated $d$-th increments.
Estimates are based on observations of the process at points $t<0$.
We obtained the estimates by representing the process under investigation as a vector-valued sequence with stationary increments.
Based on the  solutions for these type of sequences, we solved the corresponding problem for the defined class of continuous time stochastic processes.
The problem is investigated in the case of spectral certainty, where spectral densities of sequences are exactly known.
In this case  we propose an approach based on the Hilbert space projection method.
We derive formulas for calculating the spectral characteristics and the mean-square errors of the optimal estimates of the functionals.
In the case of spectral uncertainty where the spectral densities are not exactly known while, instead,
some sets of admissible spectral densities are specified, the minimax-robust method is applied.
We propose a representation of the mean square error in the form of a linear
functional in $L_1$ with respect to spectral densities, which allows
us to solve the corresponding constrained optimization problem and
describe the minimax-robust estimates of the functionals. Formulas
that determine the least favorable spectral densities and minimax-robust spectral characteristics of the optimal linear estimates of
the functionals are derived  for a wide list of specific classes
of admissible spectral densities.

\end{section}


\begin{thebibliography}{999}



\bibitem{Basawa}
\newblock I.V. Basawa, R. Lund, and Q. Shao,
\newblock \emph{First-order seasonal autoregressive processes with periodically varying parameters},
\newblock Statistics \& Probability Letters, vol. 67, no. 4, pp. 299--306, 2004.


\bibitem{DubovetskaMoklyachuk2013}
\newblock I. I. Dubovets'ka, and M. P. Moklyachuk,
\newblock \emph{Minimax estimation problem for periodically correlated stochastic processes},
\newblock Journal of Mathematics and System Science, vol. 3, no.1, pp. 26--30, 2013.

\bibitem{Dubovetska6}
\newblock I. I. Dubovets'ka, and M. P. Moklyachuk,
\newblock \emph{Extrapolation of periodically correlated processes from observations with noise},
\newblock Theory of Probability and Mathematical Statistics, vol. 88, pp. 67--83, 2014.


\bibitem{Dudek-Hurd}
\newblock A. Dudek, H. Hurd, and W. Wojtowicz,
\newblock \emph{PARMA methods based on Fourier representation of periodic coefficients},
\newblock Wiley Interdisciplinary Reviews: Computational Statistics, vol. 8, no. 3, pp. 130--149, 2016.

\bibitem{Franke1984}
\newblock J. Franke, and  H. V. Poor,
\newblock \emph{Minimax-robust filtering and finite-length robust predictors},
\newblock  In: Robust and Nonlinear Time
Series Analysis, Lecture Notes in Statistics, Springer-Verlag, No.26,
87-126, 1984.


\bibitem{Gihman_Skorohod}
\newblock I. I. Gikhman, and A. V. Skorokhod,
\newblock \emph{The theory of stochastic processes. I.},
\newblock Berlin: Springer, 574 p., 2004.


\bibitem{Glad1963}
 \newblock E. G. Gladyshev,
 \newblock \emph{Periodically and almost-periodically correlated random processes with continuous time parameter},
 \newblock Theory Probab. Appl. vol. 8, pp. 173--177, 1963.




\bibitem{Grenander}
\newblock U. Grenander,
\newblock \emph{A prediction problem in game theory},
\newblock Arkiv f\"or Matematik, vol. 3, pp. 371--379, 1957.


\bibitem{Hosoya}
 \newblock Y. Hosoya,
\newblock \emph{Robust linear extrapolations of second-order stationary processes},
 \newblock Annals of Probability, vol. 6, no. 4, pp. 574--584, 1978.


\bibitem{Kallianpur}
\newblock G. Kallianpur, and V. Mandrekar,:
\newblock  \emph{Spectral theory of stationary H-valued processes},
\newblock J. Multivariate Analysis, vol. 1, pp. 1--16,  1971.




\bibitem{Kassam_Poor1985}
 \newblock S. A. Kassam, and H. V. Poor,
\newblock \emph{Robust techniques for signal processing: A survey},
 \newblock Proceedings of the IEEE No.73, pp. 433-481, 1985.


\bibitem{Karhunen}
\newblock K. Karhunen,
\newblock \emph{Uber lineare Methoden in der Wahrscheinlichkeitsrechnung},
\newblock Annales Academiae Scientiarum Fennicae. Ser. A I, no. 37, 1947.


\bibitem{Kolmogorov}
\newblock A. N. Kolmogorov,
\newblock \emph{Selected works by A. N. Kolmogorov. Vol. II: Probability theory and mathematical statistics. Ed. by A.
N. Shiryayev. Mathematics and Its Applications. Soviet Series. 26. Dordrecht etc.}
\newblock Kluwer Academic Publishers, 1992.


\bibitem{Kozak_Mokl}
\newblock P. S. Kozak  and M. P. Moklyachuk,
\newblock \emph{Estimates of functionals constructed from random sequences with periodically stationary increments},
\newblock Theory Probability and Mathematical Statistics, vol. 97, pp. 85--98, 2018.


\bibitem{Luz_Mokl_extra}
\newblock M. Luz, and M. Moklyachuk,
\newblock \emph{Minimax-robust prediction problem for stochastic sequences with stationary increments and cointegrated sequences},
\newblock Statistics, Optimization and Information Compututing, vol. 3, no. 2, pp. 160--188, 2015.

\bibitem{Luz_Mokl_book}
\newblock M. Luz, and M. Moklyachuk,
\newblock \emph{Estimation of Stochastic Processes with Stationary Increments and Cointegrated Sequences},
\newblock London: ISTE; Hoboken, NJ: John Wiley \& Sons, 282 p., 2019.

\bibitem{Luz_Mokl_extra_GMI}
\newblock M. Luz, and M. Moklyachuk,
\newblock \emph{Minimax-robust forecasting of sequences with periodically stationary long memory multiple seasonal increments},
\newblock Statistics, Optimization and Information Computing, vol. 8, no. 3, pp. 684--721, 2020.

\bibitem{Luz_Mokl_extra_noise_PCI}
\newblock M. Luz, and M. Moklyachuk,
\newblock \emph{Minimax  prediction of sequences with periodically stationary increments observes with noise and cointegrated sequences},
\newblock In: M. Moklyachuk (eds) Stochastic Processes: Fundamentals and Emerging Applications. Nova Science Publishers,  New York, pp. 189--247, 2023.


\bibitem {Moklyachuk:1981}
\newblock M. P. Moklyachuk,
\newblock \emph{Estimation of linear functionals of stationary stochastic processes and a two-person zero-sum game},
\newblock Stanford University Technical Report, no. 169, pp. 1--82, 1981.

\bibitem{Moklyachuk}
\newblock M. P. Moklyachuk,
\newblock \emph{Stochastic autoregressive sequences and minimax interpolation},
\newblock Theory Probability and Mathematical Statistics, vol. 48, pp. 95--103, 1994.

\bibitem{Moklyachuk:2008}
\newblock M. P. Moklyachuk,
\newblock \emph{Robust estimations of functionals of stochastic processes},
\newblock {Ky\"{\i}v: Vydavnychyj Tsentr ``Ky\"{\i}vs'ky\u{\i} Universytet''}, 320 p., 2008.  (in Ukrainian)


\bibitem{Moklyachuk2015}
\newblock M. P. Moklyachuk,
\newblock \emph{Minimax-robust estimation problems for stationary stochastic sequences},
\newblock Statistics, Optimization and Information Computing, vol. 3, no. 4, pp. 348--419, 2015.

\bibitem{MoklyachukGolichenko2016}
\newblock M. P. Moklyachuk, and I. I. Golichenko,
\newblock \emph{Periodically correlated processes estimates},
\newblock Saarbr\"ucken: LAP Lambert Academic Publishing, 308 p., 2016.



\bibitem{Mokl_Mas_book}
\newblock M.P. Moklyachuk, and A.Yu. Masyutka,
\newblock \emph{Minimax-robust estimation technique: For  stationary stochastic processes},
\newblock LAP Lambert Academic Publishing,  296 p. 2012.


\bibitem{Sidei_book}
\newblock M. Moklyachuk, M. Sidei, and O. Masyutka,
\newblock \emph{Estimation of stochastic processes with missing observations},
\newblock Mathematics Research Developments. New York, NY: Nova Science Publishers, 336 p., 2019.

\bibitem{Napolitano}
\newblock A. Napolitano,
\newblock \emph{Cyclostationarity: New trends and applications},
\newblock Signal Processing, vol. 120, pp. 385--408, 2016.


\bibitem{Pinsker}
\newblock M. S. Pinsker, and A. M. Yaglom,
\newblock \emph{On linear extrapolaion of random processes with $n$th stationary incremens},
\newblock Doklady Akademii Nauk SSSR, n. Ser. vol. 94, pp. 385--388, 1954.



\bibitem{Reisen2018}
\newblock V. A. Reisen,  E. Z. Monte,  G. C. Franco,  A. M. Sgrancio,  F. A. F. Molinares,  P. Bondond, F. A. Ziegelmann, and B. Abraham,
\newblock \emph{Robust estimation of fractional seasonal processes: Modeling and forecasting daily average SO2 concentrations},
\newblock Mathematics and Computers in Simulation, vol. 146, pp. 27--43, 2018.



\bibitem{Rockafellar}
\newblock R. T. Rockafellar,
\newblock \emph{Convex Analysis},
\newblock Princeton Landmarks in Mathematics. Princeton, NJ: Princeton University Press, 451 p., 1997.

\bibitem{Solci}
\newblock C. C. Solci, V. A. Reisen, A. J. Q. Sarnaglia, and  P. Bondon,
\newblock \emph{Empirical study of robust estimation methods for PAR models with application to the air quality area},
\newblock Communication in Statistics - Theory and Methods,  vol. 48, no. 1, pp. 152--168, 2020.


\bibitem{VastPoor1983}
\newblock S. K. Vastola, and H. V. Poor,
\newblock \emph{An analysis of the effects of spectral uncertainty on Wiener filtering},
\newblock Automatica vol.28, 289 -- 293, 1983.


\bibitem{bookWien}
\newblock N. Wiener,
\newblock \emph{Extrapolation, interpolation, and smoothing of stationary time series: with engineering applications}.
\newblock  The Technology Press of The Massachusetts Institute of Technology. New York: John Wiley \& Sons, Inc., London: Chapman \& Hall, 1949.

\bibitem{Yaglom:1955}
\newblock  A. M. Yaglom,
 \newblock \emph{Correlation theory of stationary and related random processes with stationary $n$th increments},
\newblock Mat. Sbornik, vol. 37, no. 1, pp. 141--196, 1955.

\bibitem{Yaglom}
\newblock A. M.   Yaglom,
\newblock \emph{Correlation theory of stationary and related random functions. Vol. 1: Basic results;  Vol. 2: Suplementary notes and references},
\newblock Springer Series in Statistics, Springer-Verlag, New York etc., 1987.




\end{thebibliography}
\end{document}